\theoremstyle{Theorem}
\newtheorem{custheorem}{Theorem} 
\newcommand{\diff}{\mathrm{Diff}}
\newcommand{\Diff}{\diff}
\newcommand{\Cmu}[1][\mu]{{\diff^{{r}}}(\T^2; #1)} 
\newcommand{\Gl}{\mathrm{GL}}
\newcommand{\GL}{\mathrm{GL}}
\title[Rigidity of measures on the torus]{Rigidity of measures on the torus: smooth stabilizers and entropy}
\author{Aaron W. Brown}
\long\def\symbolfootnote[#1]#2{\begingroup\def\thefootnote{\fnsymbol{footnote}}
\footnote[#1]{#2}\endgroup}
\def\I{\mathcal I}
\def\Exp{\mathbb E}
\begin{document}
%\symbolfootnote[0]{\it Preliminary version.  Last updated: \today}
\maketitle

\begin{abstract}
For a nonlinear Anosov diffeomorphism $a$ of the 2-torus, we  present examples of measures so that the 
group of $\mu$-preserving diffeomorphisms is, up to zero-entropy transformations, cyclic.  For families of 
equilibrium states $\mu$, we strengthen this to show that the group of $\mu$-preserving diffeomorphism is virtually cyclic.  
\end{abstract}

\section{Introduction and statement of results}
A common problem in the theory of dynamical systems is the following:
\begin{quote}\it Given  a group action on a metric space, find, classify, and study the properties of invariant Borel probability measures.
\end{quote}
In the case that $X$ is compact and the group is generated by a homeomorphism $f\colon X\to X$, the Krylov-Bogolyubov theorem guarantees the existence of at least one invariant Borel probability measure.  
For $\Z$- or $\R$-actions on manifolds generated by Axiom A diffeomorphisms or flows, there are many mutually singular, invariant Borel probability measures.  Indeed, for every cohomology class of a H\"older continuous function $\phi \colon X\to \R$ there exists a distinct invariant Borel probability measure, called the equilibrium state for $\phi$.  (See \cite{MR2423393} and below for background in equilibrium states.)

In contrast, there are many well studied situations in which there are relatively few invariant measures.  
For instance, Rudolph showed  in \cite{MR1062766} that the only Borel measure on $S^1$ ergodic under the semigroup action generated by $z\mapsto z^2$ and $z\mapsto z^3$, and such that each 1-parameter sub-action has positive entropy, is the Lebesgue measure.  Generalizations of Rudolph's result to algebraic actions by higher rank abelian groups and semi-groups have been studied, for instance, in  \cite{KK} %, \cite{MR2261075}, 
and \cite{MR1406432}.

In this article we consider a related but far less studied problem:
\begin{quote}\it Given a manifold equipped with a Borel probability measure $\mu$, classify---or find non-trivial constraints on---the group of $\mu$-preserving diffeomorphisms.    \end{quote}
For two extreme cases the group of $\mu$-preserving diffeomorphisms is in some sense too large to admit interesting constraints.  On one extreme, if $\mu$ is a volume the group of $\mu$-preserving diffeomorphisms is an infinite dimensional manifold.   On the other extreme, if $\mu$ is a supported on a finite number of points, the group of $\mu$-preserving  diffeomorphisms is a  finite-codimensional manifold in the space of all diffeomorphisms of $M$.  
Thus a natural class of measures in which to first consider the above problem is the class of singular measures with full support.  In the present article we restrict ourselves to  large families of such measures on the 2-torus.  % and study the above problem for large classes of singular measures with full support.  

\subsection{Notation and definitions}
Consider a compact $C^\infty$ manifold  $M$ and $\{\mu_i\}$ a collection of Borel probability measure on $M$.  For $r \ge 1$ we write  $\mathrm{Diff}^r(M; \{\mu_i\})$ for the group of  $C^r$ diffeomorphisms $f\colon M \to M$ such that  $f_*\mu_i = \mu_i$ for all $i$.   For $\{\Fol_i\}$ a family of foliations on $M$ we write  $\diff^r(M; \{\Fol_i\})$ for the group of $C^r$ diffeomorphisms preserving each foliation $\Fol_i$.

We identify the torus $\T^k$ in the standard way with the quotient $\R^k/\Z^k$.  For $A\in \GL(k, \Z)$ we write $L_A\colon \T^k\to \T^k$ for the induced toral automorphism.  

Recall that a diffeomorphism of a compact manifold $f\colon M\to M$ is called \emph{Anosov} if, for any Riemannian metric, there are constants $C>0$,  ${0<\kappa<1}$ and a continuous $Df$-invariant splitting of the tangent bundle $T_x M = E^s(x)\oplus E^u(x)$ 
so that for every  $x\in M$ and $n \in \N$
\begin{align*}
 \|Df^n_x v\|\le C  \kappa^n \|v\|,  \quad &\mathrm{ for } \ v\in E^s(x)\\
\|Df^{-n}_x v\|\le C\kappa^{n}\|v\|,  \quad &\mathrm{ for }\   v\in E^u(x).
\end{align*}
It is well known that for any Anosov diffeomorphism $f$ of the torus $\T^k$, there exists a hyperbolic linear map $A\in \GL(k, \Z)$ and $h\colon \T^k \to \T^k$ with 
\[h\circ f \circ h\inv = L_A\]
where $h$ and $h\inv$ are H\"older continuous.  A similar result holds if we replace $\T^k$ with any compact nil-manifold, where $A$ is replaced with a corresponding nil-automorphism  (see \cite{Manning:1973p11377}).

For ${f\colon X\to X}$ a homeomorphism of  a compact metric space  and ${\phi\colon X \to \R}$ a continuous function, we say an $f$-invariant measure $\mu$ is an \emph{equilibrium state for $\phi$} (with respect to $f$) if $\mu$ maximizes the  expression
\[h_\mu(f) + \int \phi\ d \mu\]
over all $f$-invariant measures.  Here $h_\mu(f)$ denotes the measure theoretic entropy of $f$ with respect to $\mu$.  (See, for example, \cite{MR1326374} for the definition and properties of $h_\mu(f)$.)  For $f$ Anosov and $\phi$ H\"older continuous, there exists a unique equilibrium state $\mu_\phi$.  It is well known that the equilibrium state for a H\"older continuous function with respect to an Anosov diffeomorphism (or more generally,  a basic set for an Axiom A diffeomorphism)  is ergodic, has full support, and has positive entropy.  In addition the equilibrium state  $\mu_\phi$  possesses a \emph{local product structure}, which we will describe formally in  Theorem \ref{thm:prodstruct} below.   We refer to \cite{MR2423393} for background on equilibrium states in the uniformly hyperbolic setting.
 
For an Anosov diffeomorphism $f$, there are three `natural' equilibrium states:
\begin{itemize}
\item the \emph{forwards SRB} measure, the equilibrium state for $$\phi^u:= -\log \left(\det \big(\restrict {D f}{E^u} \big)\right);$$
\item the \emph{backwards SRB} measure, the equilibrium state for $$\phi^s:= -\log \left(\det \big(\restrict {D f\inv}{E^s} \big)\right);$$
\item the \emph{measure of maximal entropy}, the equilibrium state for $\phi\equiv 0$.
\end{itemize}
We note that in the case that $f$ is algebraic all three measures coincide.  When $f$ is volume preserving, the forwards and backwards SRB measures coincide.

\subsection{Statement of results}
To state the results, fix $\theta\in (1, \infty]$ and a (non-linear) $C^{\theta}$ Anosov diffeomorphism \[a\colon \T^2 \to \T^2.\]  (Recall that a diffeomorphism is said to be of class $C^{k+\alpha}$ for $k\in \N$ and $\alpha\in (0,1)$ if its derivatives of order $k$ exist and are H\"older continuous with exponent at least $\alpha$.)   %Associated to $a$ we have a hyperbolic matrix $A\in \GL(2,\Z)$ and a bi-H\"older homeomorphism $h\colon \T^2\to \T^2$ with \[ L_A \circ h = h \circ f. \]
For $\sigma \in \{s,u\}$ and  $v\in E^\sigma(x) \sm \{0\}$ define the functions 
\begin{align}  \lambda^\sigma (x) := \lim_{n\to \pm \infty} \frac{1}{n}\log( \|Da_x^n v\|).  \label{eq:lyp} \end{align}
By Oseledec's Theorem \cite{MR0240280} there is a set $\Lambda\subset \T^2$,  with $\mu(\Lambda)=1$ for any $a$-invariant Borel probability measure $\mu$, so that  for every $x\in \Lambda$ the limits in \eqref{eq:lyp} exist.  
In the case when $\mu$ is $a$-ergodic the functions $\lambda^u(\cdot )$ and $\lambda^s(\cdot )$ are constant $\mu$-a.e.\ whence we  write $\lambda_\mu^u$ and $\lambda_\mu^s$ for these constants.

\begin{thm}\label{thm:2}
Let $\mu$ be an $a$-ergodic measure on  $\T^2$ with $h_\mu(a)>0$ and full support.  If $\mu$  satisfies \[\lambda_\mu^u \neq - \lambda_\mu^s\] then for $r>1$, %$r\ge 1+\alpha$, 
then
\begin{enumerate}
\item the set of zero-entropy diffeomorphisms $N= \{ g\in \Cmu \mid h_\mu(g) = 0\}$ is a normal subgroup of $\Cmu$;
\item there is a natural isomorphism $\Cmu/N \cong \Z$  whenever $\Cmu\neq N$.
\end{enumerate}\end{thm}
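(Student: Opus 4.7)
The plan is to construct a group homomorphism $\chi \colon \Cmu \to \R$ whose kernel equals $N$ and whose image is the discrete subgroup $\lambda_\mu^u \cdot \Z$; both conclusions (1) and (2) would follow at once.

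\textbf{Step 1: Foliation rigidity.}
Suppose $g \in \Cmu$ has $h_\mu(g) > 0$. In dimension two, positive entropy together with Ruelle's inequality and area considerations force $g$ to be $\mu$-hyperbolic (one positive and one negative Lyapunov exponent $\mu$-a.e.), so Pesin theory yields stable and unstable foliations for $(\mu,g)$. I would argue that these coincide (up to swap) with the $a$-foliations $W^u, W^s$ at $\mu$-a.e.\ point. Indeed, the Ledrappier--Young formulas
\[ h_\mu(a) \; = \; \lambda_\mu^u \, d^u \; = \; - \lambda_\mu^s \, d^s \]
combined with the hypothesis $\lambda_\mu^u \neq -\lambda_\mu^s$ give $d^u \neq d^s$, so the two $a$-foliations are distinguished by the $\mu$-conditional dimensions. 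Assume without loss of generality that $d^u < d^s$; then $d^u \in (0,1)$ and $\mu^u_x$ is singular on $W^u(x)$. Using exact dimensionality of $\mu$ together with absolute continuity of Pesin unstable holonomies for $g$, one matches the Pesin unstable foliation of $g$ with the $a$-foliation whose conditional dimension is smaller, and similarly for the stable foliations.

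\textbf{Step 2: The Lyapunov homomorphism.}
Pass to the finite-index subgroup of $\Cmu$ preserving $W^u$ (rather than swapping it with $W^s$). On this subgroup, $Dg$ preserves the distribution $E^u$ $\mu$-a.e., and I would define
\[ \chi(g) \; := \; \int_{\T^2} \log \bigl\| Dg|_{E^u} \bigr\| \, d\mu . \]
The cocycle identity makes $\chi$ a group homomorphism to $\R$, and applying the Ledrappier--Young formula to $g$ (whose Pesin unstable is $W^u$) gives $h_\mu(g) = d^u \cdot |\chi(g)|$, so $\ker \chi = N$. This shows $N$ is normal in the finite-index subgroup, and a short argument extends normality to all of $\Cmu$, establishing (1); it also reduces (2) to proving $\chi(\Cmu) = \lambda_\mu^u \cdot \Z$.

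\textbf{Step 3: Discreteness of $\chi(\Cmu)$ --- the main obstacle.}
The conditional measures $\mu^u_x$ are exact-dimensional singular probability measures on the smooth $1$-manifolds $W^u(x)$, with $d^u \in (0,1)$ strictly. The action of $a$ rescales $\mu^u_x$ by factor $e^{-\lambda_\mu^u}$ via unstable holonomies, while any $g$ in our subgroup rescales $\mu^u_x$ by $e^{-\chi(g)}$. The hard step is a rigidity statement for the scaling group of such a dynamically defined singular measure: exploiting $r>1$ smoothness of $g$, absolute continuity of $a$-unstable holonomy, and the approximate self-conformal structure of $\mu^u_x$ inherited from the Anosov dynamics, one must show that any $C^r$ diffeomorphism rescaling $\mu^u_x$ has scaling exponent in the discrete group $\lambda_\mu^u \cdot \Z$. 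I expect this to rely on ideas in the spirit of Hochman's scaling-dimension theorems and the Kalinin--Sadovskaya cocycle rigidity framework. Combined with $\chi(a) = \lambda_\mu^u$, this yields the desired isomorphism $\Cmu/N \cong \Z$.
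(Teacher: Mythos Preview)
Your Step 2 is essentially the paper's argument: define $\bar\chi(g)=\int\log\|Dg|_{E^u}\|\,d\mu$, check it is a homomorphism via the cocycle identity, and relate it to entropy by $h_\mu(g)=\delta^u|\bar\chi(g)|$. Good.

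There are two genuine gaps.

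\textbf{Step 1 does not cover zero-entropy elements.} Your foliation-rigidity argument assumes $h_\mu(g)>0$ in order to invoke Pesin manifolds for $g$. But the homomorphism $\chi$ must be defined on \emph{all} of $\Cmu$, so you need every $g$ (including those with $h_\mu(g)=0$) to preserve $E^u$. The paper handles this directly: its Proposition~3.1 shows that under the hypothesis $\lambda_\mu^u+\lambda_\mu^s\neq 0$ (equivalently $\delta^u\neq\delta^s$) and full support, \emph{every} $g\in\Cmu$ preserves the slow foliation $\Folu$. The argument compares the transverse (quotient) dimension of $\mu$ along $\Folu$ with the conditional dimension along $g(\Folu)$ and derives a contradiction from $\delta^u>\delta^s$ if $g(\Folu)\neq\Folu$; no hyperbolicity of $g$ is needed. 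Also note the paper never has to worry about a ``swap'': only one foliation is shown to be preserved, and that suffices.

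\textbf{Step 3 is the wrong mechanism.} You propose to extract discreteness from a rigidity theorem for the scaling group of the singular conditional measures $\mu^u_x$, citing Hochman-type ideas. For a general $a$-ergodic $\mu$ (not an equilibrium state), the conditionals $\mu^u_x$ have no a priori self-similar or self-conformal structure beyond exact-dimensionality, so no such scaling-rigidity theorem is available; your sketch does not supply one. The paper obtains discreteness by a completely different, elementary route: use the Franks--Manning conjugacy $h$ to the linear model $L_A$, observe that since $g$ preserves $\Folu$ the conjugate $h\circ g\circ h^{-1}$ acts \emph{affinely} on the one-dimensional quotient $\R^2/\tilde{\mathcal E}^u$ (Claim~4.2), and show that the linear part $\Psi(g)$ of this action is realized by an element of the centralizer $C(A)\subset\GL(2,\Z)$. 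Since $C(A)=\{\pm M^n:n\in\Z\}$ for some hyperbolic $M$, the possible values of $\Psi(g)$ form a discrete set, and one checks that $h_\mu(g)$ depends only on $\Psi(g)$. This is where the discreteness actually comes from: the integer-lattice structure of $\GL(2,\Z)$, not the fine structure of $\mu^u_x$.
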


\noindent In particular, the entropy of $\mu$ is \emph{quantized} in the sense that 
	\[\inf\{h_\mu (g)\mid g\in \mathrm{Diff}^r(M; \mu) \text{ and } h_\mu(g)>0\} >0.\]  Note that the group $\Cmu\neq N$ whenever $r\le \theta$ as $a\in \Cmu$.  

%$\diff^r$-entropy-spectrum of $\mu$ is of the form  $\lambda\N$ for some $\lambda\in [0, \infty)$.   

%In particular if there is some $g\in \Cmu$ with $h_\mu(g)  >0$ then $\mu$ is  $\diff^r$-quantized.
%Note that for $r<\theta$, we are guaranteed  $\lambda \neq 0$ in the conclusion of Theorem \ref{thm:2}. 
 
For a smaller class of measures, we are able to give a more precise description of the group $\Cmu$.   

\begin{thm}\label{thm:1}
Let  $\mu$ be an equilibrium state for a H\"older continuous potential (with respect to $a$) that is neither the measure of maximal entropy, nor the forwards or backwards SRB measures. Assume in addition that \[\lambda_\mu^u +\lambda_\mu^s \neq 0.\]  Then for any $r\ge 1$ there is an $m\in \N$ so that the cyclic subgroup generated by $a^m\colon \T^2\to \T^2$ %$\{a^n \mid n\in \Z\}$
 has finite index in $\Cmu$.  In particular, $\Cmu$ is either finite or virtually infinite cyclic.  
 
% \cmt{add not maximal entropy}
\end{thm}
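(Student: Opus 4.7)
The plan is to combine Theorem~\ref{thm:2} with additional rigidity coming from the Gibbs structure of the equilibrium state. Since $\mu$ is an ergodic equilibrium state for a H\"older potential over an Anosov diffeomorphism, it is fully supported, has positive entropy, and, together with the hypothesis $\lambda_\mu^u + \lambda_\mu^s \neq 0$, satisfies the hypotheses of Theorem~\ref{thm:2}. That theorem provides the normal subgroup $N \lhd \Cmu$ of zero-entropy elements and the isomorphism $\Cmu/N \cong \Z$ generated by the class $[a]$. Since $a \in \Cmu$ has positive entropy, $\Cmu \neq N$, so it suffices to prove that $N$ is finite; any lift of a generator of $\Cmu/N$ will then yield an integer $m$ with $\langle a^m \rangle$ of finite index in $\Cmu$.

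For any $g \in N$, Ruelle's inequality (applied to both $g$ and $g^{-1}$) forces both Lyapunov exponents of $g$ with respect to $\mu$ to vanish. The central step is to show that every such $g$ preserves the $a$-invariant foliations $\Fol^u$ and $\Fol^s$. Using the local product structure of $\mu$ (Theorem~\ref{thm:prodstruct}), I would analyze how $g$ acts on local product rectangles $W^u_{\mathrm{loc}}(x) \cap W^s_{\mathrm{loc}}(x)$ and on the associated conditional measures $\mu^u_x$, $\mu^s_x$. Because $g$ has zero Lyapunov exponents it cannot introduce a new hyperbolic splitting on $\mu$; combined with the essential uniqueness of the conditional measures and the asymmetry hypothesis $\lambda_\mu^u + \lambda_\mu^s \neq 0$ (which rules out any swap of $\Fol^u$ and $\Fol^s$), this forces $g$ to preserve each of the two foliations individually.

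Once $g \in N$ preserves $\Fol^u$, its restriction to each unstable leaf $W^u(x)$ is a $C^r$ diffeomorphism preserving the conditional Gibbs measure $\mu^u_x$. The density of $\mu^u_x$ relative to arclength is determined, up to normalization, by a convergent series in the H\"older potential $\phi$ along the backward $a$-orbit, and is rigidly tied to $\phi$ up to coboundary. The hypothesis that $\phi$ is cohomologous neither to the SRB potential $\phi^u$ (giving Lebesgue conditionals) nor to a constant (giving Margulis conditionals for the measure of maximal entropy) excludes precisely the two cases in which $\mu^u_x$ admits continuous smooth symmetries. A Liv\v{s}ic--Ma\~{n}\'{e}-type rigidity argument would then show that the group of self-maps of $W^u(x)$ preserving $\mu^u_x$ and normalizing the restricted $a$-dynamics is finite; running the parallel argument on $\Fol^s$ and combining via the local product structure yields $|N| < \infty$. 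The main obstacle is the foliation-preservation step, which requires marrying Pesin theory (to exploit the vanishing exponents of $g$) with the Gibbs/product structure of $\mu$, and where the asymmetry hypothesis $\lambda_\mu^u + \lambda_\mu^s \neq 0$ plays its essential role.
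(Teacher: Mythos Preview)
Your outline has a genuine structural gap and also diverges sharply from the paper's route.

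First, the reduction to Theorem~\ref{thm:2} already loses ground: Theorem~\ref{thm:2} is stated only for $r>1$, while Theorem~\ref{thm:1} must hold for all $r\ge 1$; and for $r>\theta$ you cannot assume $a\in\Cmu$ at all, so the sentence ``Since $a\in\Cmu$ has positive entropy'' is not available in general. (Incidentally, Theorem~\ref{thm:2} does not assert that $[a]$ generates $\Cmu/N$; this is harmless if $N$ is finite, but it is not what is proved.)

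Second, and more seriously, your proposed mechanism for showing $|N|<\infty$ is not correctly formulated. An element $g\in N$ that preserves $\Folu$ sends the leaf $W^u(x)$ to $W^u(g(x))$, not to itself; so there is no ``restriction of $g$ to $W^u(x)$'' acting as a self-map preserving $\mu^u_x$, and the finiteness question is not about the symmetry group of a single conditional measure. The Liv\v{s}ic--Ma\~n\'e heuristic you invoke does not apply to this situation without substantial reinterpretation, and you have not supplied one.

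The paper proceeds quite differently. Proposition~\ref{prop:2} (valid for all $r\ge 1$, and using the local product structure together with the exclusion of the SRB measures) shows that \emph{every} $g\in\Cmu$, not just the zero-entropy ones, preserves both $\Folu$ and $\Fols$. One then conjugates by the Franks--Manning homeomorphism $h$: since $h\circ g\circ h^{-1}$ preserves the two irrational linear eigenfoliations of $L_A$, Claim~\ref{prop:affine} forces it to be affine, $h\circ g\circ h^{-1}=T(v)\circ L_B$ with $B\in C(A)\subset\GL(2,\Z)$. The conclusion then comes from two algebraic facts: the centralizer $C(A)$ is of the form $\{\pm M^n:n\in\Z\}$, and the set of admissible translations $[v]$ is finite because the group of rotations preserving $h_*\mu$ is finite (here the hypothesis that $\mu$ is not the measure of maximal entropy is used). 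This embeds $\Cmu$ into a group that visibly contains $\langle a^m\rangle$ with finite index. The entropy-based quotient of Theorem~\ref{thm:2} is not invoked at all; the finiteness comes from the arithmetic of $\GL(2,\Z)$ and from $h_*\mu$ not being Haar, not from leaf-wise measure rigidity.
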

Recall that a group $G$ is called \emph{virtually infinite cyclic} if there is a finite index subgroup $G'\subset G$ with $G' \cong \Z$. 
We note that for $r\le \theta$, we can take $m = 1$ in the conclusion of Theorem \ref{thm:1}. Note however that we do not rule out the possibility that $m = 0$; that is, in the case that there are no infinite order elements in $\Cmu$ when $r>\theta$.  

Using similar arguments we obtain the following.  
\begin{custheorem}\label{thm:1'}
Let  $\mu, \nu$ be two $a$-invariant, ergodic Borel probability measures with  full support.  Assume  $h_\mu(a)>0$,  $h_\nu(a)>0$, and \[\lambda_\nu^u +\lambda_\nu^s<0< \lambda_\mu^u +\lambda_\mu^s.\]   Then for any $r\ge 1$ there is an $m\in \N$ so that the cyclic subgroup  generated by $a^m\colon \T^2 \to \T^2$  has finite index in $\Cmu[\{\nu, \mu\}]$.  In particular, $\Cmu[\{\nu, \mu\}]$ is either finite or virtually infinite cyclic.  
\end{custheorem}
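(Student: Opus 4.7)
The plan is to mirror the proof of Theorem \ref{thm:1}, with the pair $(\mu,\nu)$ replacing the local product structure of a single equilibrium state. Both $\mu$ and $\nu$ satisfy the hypotheses of Theorem \ref{thm:2}---positive entropy, full support, and $\lambda^u + \lambda^s \neq 0$ (strictly positive for $\mu$, strictly negative for $\nu$)---so I would begin by applying Theorem \ref{thm:2} to each separately. This yields natural homomorphisms $\chi_\mu \colon \Cmu \to \Z$ and $\chi_\nu \colon \Cmu[\nu] \to \Z$ with zero-entropy kernels $N_\mu,\,N_\nu$ and with $\chi_\mu(a)=\chi_\nu(a)=1$. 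Restricting to $G := \Cmu[\{\nu,\mu\}]$ produces two characters $\chi_\mu|_G,\,\chi_\nu|_G \colon G \to \Z$.

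The second step is to show $\chi_\mu|_G = \chi_\nu|_G$, using the naturality assertion of Theorem \ref{thm:2}. Concretely, one expects the isomorphism to be governed by a measure-independent invariant of $f$---for instance, the integer $n$ such that $f_\ast = \pm A^n$ on $H_1(\T^2;\Z)$, where $A$ is the linearization of $a$. Since such an invariant depends only on $f$ and not on which full-support invariant measure one uses, we obtain $\chi_\mu|_G = \chi_\nu|_G$; call this common character $\chi$.

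Let $K := \ker\chi \subseteq G$. Every $f\in K$ lies in $N_\mu \cap N_\nu$ and is homotopic to a finite-order linear automorphism, so in particular $h_\mu(f) = h_\nu(f) = 0$. The main obstacle is showing $K$ is finite. The hypothesis $\lambda_\nu^u + \lambda_\nu^s < 0 < \lambda_\mu^u + \lambda_\mu^s$ is essential here: it forces the conditional measures of $\mu$ and $\nu$ transverse to the $a$-unstable foliation to scale under $a$ at genuinely different rates, so the joint stabilizer of this pair of transverse systems is rigid. Adapting the quantization-of-dilation argument from the proof of Theorem \ref{thm:1}---applied to the pair $(\mu,\nu)$ in place of the two transverse measures of a single equilibrium state---should yield the required finiteness of $K$.

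Combining these, $G/K$ embeds in $\Z$. Letting $m \in \N$ be the least positive value of $\chi(G)$ (or declaring $G = K$ finite if $\chi\equiv 0$), one has $a^m \in G$ and $\langle a^m\rangle$ of finite index in $G$, proving the theorem. The central technical input is the finiteness of $K$; I expect it to require the bulk of the effort, essentially carrying over the rigidity machinery of Theorem \ref{thm:1} to the two-measure setting.
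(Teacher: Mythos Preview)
Your route diverges from the paper's and has a genuine gap at the step you flag as ``the central technical input.'' The paper does \emph{not} prove Theorem~\ref{thm:1} by a quantization-of-dilation argument on transverse conditional measures; rather, Proposition~\ref{prop:2} (for Theorem~\ref{thm:1}) shows that every $g$ preserves \emph{both} foliations $\Fol^u,\Fol^s$, after which the conjugacy $h$ turns $g$ into an affine map of $\T^2$, and the conclusion follows from the algebraic structure of the centralizer $C(A)\subset\GL(2,\Z)$ together with the finiteness of $\{[v]:T(v)_*h_*\mu=h_*\mu\}$. For Theorem~\ref{thm:1'} the same scheme works, with Proposition~\ref{prop:1} replacing Proposition~\ref{prop:2}: the sign hypothesis $\lambda_\nu^u+\lambda_\nu^s<0$ lets you apply Proposition~\ref{prop:1} to $\nu$ and conclude $g$ preserves $\Fol^u$, while $\lambda_\mu^u+\lambda_\mu^s>0$ lets you apply it to $\mu$ (with $a$ replaced by $a^{-1}$) and conclude $g$ preserves $\Fol^s$. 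That is the entire role of the two opposite inequalities---one measure pins down each foliation---and it bypasses Theorem~\ref{thm:2} altogether.

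By contrast, your plan routes through Theorem~\ref{thm:2}, which already costs you the case $r=1$ (Theorem~\ref{thm:2} needs $r>1$), and then leaves the finiteness of $K=\ker\chi$ as a promissory note. A zero-entropy $f\in K$ is homotopic to $\pm\mathrm{Id}$, but without knowing $f$ preserves both $\Fol^u$ and $\Fol^s$ you have no mechanism to force $h\circ f\circ h^{-1}$ to be affine, and hence no way to trap $f$ in a finite set. The ``different scaling rates of transverse conditionals'' heuristic does not by itself produce such rigidity; what actually does is the foliation-preservation argument above, which is precisely the content of Proposition~\ref{prop:1} applied twice. Once you have both foliations preserved, the affine reduction and the centralizer computation give finiteness directly---and at that point the detour through $\chi_\mu,\chi_\nu$ and Theorem~\ref{thm:2} is unnecessary.
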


\begin{comment}
The techniques we employ to establish Theorem \ref{thm:1} fail for measures that are not equilibrium states and for the two SRB measures for $a$.  However, for a larger class of measures, including many non-equilibrium states and the two SRB measures, we show that, in terms of entropy, the group $\Cmu$ behaves essentially like a cyclic group.  % for a larger class of measures than satisfy the hypothesis of Theorem \ref{thm:1}.  
For a measure $\mu$ on a manifold $M$, we define the \emph{$\Diff^r$-entropy-spectrum} of $\mu$ to be the set
	\[\{h_\mu (g)\mid g\in \mathrm{Diff}^r(M; \mu) \}.\]
We say that a measure $\mu$ is \emph{$\Diff^r$-quantized} if
	\[\inf\{h_\mu (g)\mid g\in \mathrm{Diff}^r(M; \mu) \text{ and } h_\mu(g)>0\} >0.\]  
	
\end{comment}
We emphasize  that Theorem \ref{thm:1}  holds for $r=1$, where as Theorem \ref{thm:2} requires the additional hypothesis that $r>1$.  
The hypothesis in all our theorems that  $\lambda_\mu^u \neq - \lambda_\mu^s$ forces the dynamics $a\colon \T^2\to \T^2$ to be non-linear and the measure $\mu$ to be singular with respect to the Riemannian volume.

\section{Preliminaries}

We begin with some basic properties of Anosov diffeomorphisms followed by background in the theory of non-uniform hyperbolicity.  For more background and proofs, we refer the reader to \cite{MR1326374} in the Anosov and uniformly hyperbolic setting, and to \cite{MR2348606} in the non-uniform setting.

\subsection{Anosov diffeomorphisms of $\T^2$}
It is well known that the only surface supporting an Anosov diffeomorphism is the torus $\T^2$.  Fix $f\colon \T^2\to \T^2$ a $C^{1+\alpha}$ ($\alpha\in (0,1)$) Anosov diffeomorphism.  Then for every $x\in \T^2$ there are $C^{1+\alpha}$ injectively immersed curves $\stab x $ and $\unst x$, called the \emph{stable} and \emph{unstable manifolds}, satisfying 
\begin{align*}
\stab x &:= \{y\in \T^2 \mid d(f^n(x), f^n(y)) \to 0  \text{  as  } n\to \infty\};\\
\unst x &:= \{y\in \T^2 \mid d(f^{-n}(x), f^{-n}(y)) \to 0  \text{  as  } n\to \infty\}.
\end{align*}
For sufficiently small $\epsilon$ we also have that the sets 
\begin{align*}
\locStab x &:= \{y\in \T^2 \mid d(f^n(x), f^n(y)) \le \epsilon \text{  as  } n\to \infty\};\\
\locUnst x &:= \{y\in \T^2 \mid d(f^{-n}(x), f^{-n}(y)) \le \epsilon \text{  as  } n\to \infty\},
\end{align*}
called the  \emph{local} stable and {unstable manifolds}, are $C^{1+\alpha}$-embedded curves.

Recall that a \emph{$d$-dimensional} \emph{$C^{r,k}$ foliation} $\Fol$ of an $n$-manifold $M$ is a partition of $M$ by immersed submanifolds $\{\Fol(x)\}_{x\in M}$, and a cover by open sets $\{U_\beta\}$ such that 
\begin{enumerate}
\item the connected component of $\Fol(x) \cap U_\beta$ containing  $x$, which we denote by $\Fol_{U_\beta}(x)$, is a $C^r$-embedded copy of $\R^d$ for every $x\in U_\beta$ and all $\beta$;
\item there are coordinate maps  \[\phi_\beta\colon \R^{d}\times \R^{n-d} \to U_\beta\] such that \[\phi_\beta\colon \R^{d}\times \{y\} = \Fol_{U_\beta}(\phi_\beta(0,y));\]
\item on the intersection $U_\beta\cap U_\alpha$ the transition maps \[\phi_\beta\inv \circ \phi_\alpha\colon  \phi_\alpha \inv(U_\alpha)\subset  \R^n\to \R^n\] are $C^k$.
\end{enumerate}

 Here $\Fol(x)$ is called the \emph{leaf through $x$},   $\Fol_{ U_\beta}(x)$ is  called the \emph{local leaf through $x$}, and $U_\beta$ is called a \emph{foliation chart}.  In general, given a foliation $\Fol$ of $M$ and an open set $V\subset M$ we denote by $\Fol_V$ the \emph{local foliation} of $V$ whose leaf through $x$ is the connected component of $\Fol(x) \cap V$ containing $x$.

For $f$ an Anosov diffeomorphism, the partitions of $\T^2$ by stable and unstable manifolds induce foliations $\Fols$ and $\Folu$.  When working with the foliations $\Fols$ and $\Folu$ we write $W^\sigma_V(x)$ for the  leaf of $\Fol_V^\sigma$ through $x$.  By a \emph{$C^r$ bifoliation chart} for the foliations $\Fols$ and $\Folu$ we mean an open set  $V\subset \T^2$ and a $C^r$ diffeomorphism \[\phi\colon \R \times \R \to V\] with 
\[\phi\colon \{x\} \times \R\mapsto \locStab[V] {\phi(x,0)}\quad \text{and} \quad \phi\colon  \R\times \{y\} \mapsto \locUnst[V] {\phi(0,y)}.\]

In higher dimensions one needs to be careful about the regularity of the foliations $\Folu$ and $\Fols$: typically each foliation is at best $C^{1+\alpha, \text{H\"older}}$.  However, in our setting the low ambient dimension  guarantees stronger regularity.  The following is well known.  (See, for example, \cite{MR1887706} and \cite[Theorem 6.1]{MR1432307}; note that we need the hypothesis that the dynamics is  at least $C^{1+\alpha}$.)
% and can be deduced, for example, from \cite{MR1304137}.  %(Note 

\begin{prop}\label{prop:folreg}
Let $f\colon \T^2 \to \T^2$ be a $C^{1+\alpha}$ Anosov diffeomorphism.  Then the unstable and stable foliations $\Folu$ and $\Fols$ are  $C^{1+\alpha, 1+\alpha'}$ for some $\alpha'$.
\end{prop}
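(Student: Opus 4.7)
The plan is to prove this in two steps corresponding to (a) regularity of individual leaves along the leaf direction and (b) regularity of the holonomy maps controlling the transverse structure of the foliations. Step (a) is standard: the Hadamard--Perron / stable manifold theorem for $C^{1+\alpha}$ hyperbolic maps presents each $\locStab x$ and $\locUnst x$ as the graph of a $C^{1+\alpha}$ function over $E^s(x)$, respectively $E^u(x)$, so each local stable or unstable manifold is a $C^{1+\alpha}$-immersed arc. I would simply cite this rather than reproduce it.

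Step (b) is the main content, and I would run the Hirsch--Pugh--Shub derivative-cocycle argument, which is especially clean here because $E^u,E^s$ are one-dimensional line fields. Given two nearby local unstable leaves $\locUnst[V]{x_0}$ and $\locUnst[V]{y_0}$ inside a bifoliation chart $V$, consider the stable holonomy $\pi\colon \locUnst[V]{x_0}\to \locUnst[V]{y_0}$ defined by $\pi(z) = \locStab{z}\cap \locUnst[V]{y_0}$. Since $Df|_{E^u}$ is a scalar, iterating the conjugacy relation $Df|_{E^u}(\pi(z))\cdot D\pi(z) = D\pi_{fx_0, fy_0}(fz)\cdot Df|_{E^u}(z)$ and using that $D\pi_{f^n x_0,f^n y_0}(f^n z)\to 1$ as $n\to \infty$ yields the telescoping formula
\[
D\pi(z) \;=\; \prod_{n=0}^{\infty} \frac{Df|_{E^u}(f^n z)}{Df|_{E^u}(f^n \pi(z))}.
\]
The $C^{1+\alpha}$ hypothesis on $f$ and H\"older regularity of $E^u$ make $\log Df|_{E^u}$ H\"older continuous on $\T^2$, while the exponential convergence $d(f^n z, f^n \pi(z))\to 0$ along the stable direction forces each factor toward $1$ at a geometric rate. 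A careful uniform estimate, splitting the sum at a scale depending on $d(z_1,z_2)$ into a short block (where one uses forward unstable expansion) and a long tail (where one uses stable contraction), then upgrades convergence of the product to H\"older continuity of $D\pi$, giving $\pi\in C^{1+\alpha'}$ for some $\alpha'>0$. Applying the same argument to $f^{-1}$ yields $C^{1+\alpha'}$ unstable holonomies between nearby stable leaves.

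To conclude, I would assemble the bifoliation chart $\phi\colon \R^2\to V$ by parameterizing one local unstable leaf by arclength (which is $C^{1+\alpha}$ by (a)) and sliding between parallel unstable leaves via the $C^{1+\alpha'}$ stable holonomies from (b); the claimed joint regularity $C^{1+\alpha,1+\alpha'}$ of $\phi$ then follows from Journ\'e's regularity lemma applied to the transverse pair $\Folu,\Fols$. The main technical obstacle is the uniform H\"older estimate on the telescoping product in step (b): while the bunching inequality $\|Df|_{E^u}\|^{-(1+\alpha')}\|Df|_{E^s}\|<1$ needed to make the product H\"older-uniformly convergent is automatic for small $\alpha'>0$ thanks to one-dimensionality of $E^{u,s}$, balancing the competing scales of exponential decay from hyperbolicity against the H\"older modulus of $\log Df|_{E^u}$ requires the careful estimate carried out in \cite{MR1887706} and \cite[Theorem 6.1]{MR1432307}, which I would invoke rather than reproduce.
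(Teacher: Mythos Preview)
The paper does not actually prove this proposition: it is stated as well known and simply attributed to \cite{MR1887706} and \cite[Theorem 6.1]{MR1432307}, with the remark that the $C^{1+\alpha}$ hypothesis is needed. Your sketch is precisely the standard argument carried out in those references---$C^{1+\alpha}$ leaves from the stable manifold theorem, $C^{1+\alpha'}$ holonomies from the telescoping product for $D\pi$ together with the automatic bunching in codimension one---so there is no divergence in approach, only in level of detail. One small comment: invoking Journ\'e's lemma at the end is more than you need, since the paper's definition of a $C^{r,k}$ foliation separates leafwise and transverse regularity rather than demanding joint smoothness of the chart; once you have $C^{1+\alpha}$ leaves and $C^{1+\alpha'}$ holonomies, the foliation charts with $C^{1+\alpha'}$ transition maps can be built directly.
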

For $U$ a  foliation chart for $\Folu$ and embedded curves $D, D' \subset U$ with $D$ and $D'$ transverse to each of the local leaves $\{\locUnst[U]x\} _{x\in U}$, we define the \emph{unstable holonomy} maps
\[h^u_{D, D'} \colon G\subset D \to D'\] 
by \[h^u_{D, D'}\colon z \mapsto D' \cap \locUnst[U]z\]
when defined.  As a consequence of Proposition \ref{prop:folreg} we obtain that the unstable holonomy maps $h_{D, D'}$ are $C^{1+\alpha'}$; in particular they are bi-Lipschitz.  \emph{Stable holonomy} maps are defined similarly and are also bi-Lipschitz.

\subsection{Lyapunov exponents}\label{sec:lyap}
Let $f\colon M \to M$ be a $C^{1+\alpha}$ diffeomorphism of a Riemannian manifold.  
We recall that there is a Borel subset $\Lambda\subset M$, called the set of \emph{regular points}, Borel functions $r(x)$, and %\colon \Lambda\to \N$ and 
$$ \lambda_0(x) < \lambda_1(x) < \dots < \lambda_{r(x)}(x)$$ on $\Lambda$, and  a decomposition of the tangent space \[T_xM = \bigoplus_{0\le j \le r(x) } E^j(x)\]  over $ \Lambda$ so that  (among other properties) for $x\in \Lambda$ and  $v\in E^j(x)\sm\{0\}$ \[\lambda_j(x):=\lim_{n\to \pm \infty} \dfrac{1}{n} \log\left(  \|Df^n_x(v)\| \right). \] 
For $x\in \Lambda$, the numbers $\{\lambda_i(x)\}$ are called the \emph{Lyapunov exponents} at $x$ and the subspaces $E^j(x)$ are called the \emph{Lyapunov subspaces} at $x$.  
By Oseledec's Theorem \cite{MR0240280}, for any $f$-invariant Borel probability measure $\mu$ we have that $\mu(\Lambda)=1$, and the splitting $T_xM = \oplus_{0\le j \le r(x) } E^j(x)$ depends $\mu$-measurably on the point $x$.

By \cite{MR0458490},  for every $x\in \Lambda$ and  $0\le i\le r(x)$ with $\lambda_i(x)> 0$ there is a $C^{1+\alpha}$ injectively immersed  $\left({\sum_{\lambda_j(x)\ge \lambda_i(x)} \dim E^j(x)}\right)$-dimensional open manifold $\wtd W^i(x)$ defined by 
\[\wtd W^i(x) := \left\{ y\in M \mid \liminf_{n\to \infty}-\dfrac{1}{n} \log(d(f^{-n}(y), f^{-n}(x)))\ge \lambda_i(x)\right\} \]
with \[T_x\wtd W^i(x) = \bigoplus_{\lambda_j(x)\ge \lambda_i(x)} E^j(x)\] called the \emph {$i^{\text {th}}$ unstable Pesin manifold}  at $x$.  
Similarly defined \emph{stable Pesin manifolds} $\wtd W^i(x)$ exist for $x\in \Lambda$ with  $\lambda_i(x)< 0$.

When $M= \T^2$ and $f$ is Anosov,  for any regular point $x$ we have $r(x) = 1$ and  $\lambda_0(x) <0<\lambda_1(x)$.   In this context and write $\lambda^s = \lambda_0$ and $\lambda^u = \lambda_1$ as in \eqref{eq:lyp} for the \emph{stable} and \emph{unstable Lyapunov exponents}.  Clearly  in this context, for any regular point $x\in \T^2$ we have  $\wtd W^1(x) \subset \unst x$ and $\wtd W^0(x) \subset \stab x$.

\subsection{Conditional measures}
Recall (see, for example, \cite{MR0047744}) that given a measurable partition $\xi$ of  Lebesgue space $(X, \mu)$ one may find  a collection of measures $\{\td \mu^\xi_x\}_{x\in X}$, called a \emph{family of conditional probability measures}, such that \begin{enumerate}
\item $\td \mu^\xi_x = \td \mu^\xi_y$ for $y\in \xi(x)$;
\item $\td \mu^\xi_x(\xi(x)) = 1$ and $\td \mu^\xi_x(X\sm \xi(x)) = 0$ for $\mu$-a.e.\ $x$;  
\item for measurable subsets $A\subset X$ the functions $x\mapsto \td  \mu^\xi_x(A)$ are measurable and 
\[\mu(A) = \int _X \td  \mu^\xi_x(A) \ d \mu(x);\]
\item any other collection of measures satisfying  (1)-(3) is equivalent to $\{\td \mu^\xi_x\}_{x\in X}$  on a set of full measure.
\end{enumerate}
We need the following straightforward observation.
\begin{claim}\label{clm:condtransfer} Let $(X, \mu)$ be a Lebesgue space, $\xi$  a measurable partition, and $g\colon X\to X$ an invertible measure preserving transformation.  Let $\{\td \mu^\xi_x\}$ and $\{\td \mu^{g(\xi)}_x\}$ be families of conditional probability measures for the partitions $\xi$ and $g(\xi)$.  Then for $\mu$-a.e.\ $x$ \[g_*(\td \mu^\xi_x) = \td \mu^{g(\xi)}_{g(x)}.\]\end{claim}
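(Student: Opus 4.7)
The plan is to use the essential uniqueness of conditional measures (property (4) in the paper). I would construct an explicit candidate family of conditional probability measures for the partition $g(\xi)$ by pushing forward the given family for $\xi$, verify the three defining properties, and then invoke uniqueness.

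Concretely, for each $y \in X$ define $\nu_y := g_*\bigl(\td\mu^\xi_{g^{-1}(y)}\bigr)$. I would check the three axioms for a family of conditional measures relative to the partition $g(\xi)$. First, constancy on atoms: if $y'$ lies in the same atom of $g(\xi)$ as $y$, then $g^{-1}(y')$ lies in the same atom of $\xi$ as $g^{-1}(y)$, so $\td\mu^\xi_{g^{-1}(y)} = \td\mu^\xi_{g^{-1}(y')}$ and hence $\nu_y = \nu_{y'}$. Second, support: since $\td\mu^\xi_{g^{-1}(y)}$ is concentrated on $\xi(g^{-1}(y))$ for $\mu$-a.e.\ $y$, the pushforward $\nu_y$ is concentrated on $g(\xi(g^{-1}(y))) = g(\xi)(y)$ and has total mass $1$.

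Third, the integration formula: for any measurable $A \subset X$,
\[
\int_X \nu_y(A)\, d\mu(y) = \int_X \td\mu^\xi_{g^{-1}(y)}\bigl(g^{-1}(A)\bigr)\, d\mu(y) = \int_X \td\mu^\xi_x\bigl(g^{-1}(A)\bigr)\, d\mu(x) = \mu\bigl(g^{-1}(A)\bigr) = \mu(A),
\]
where the second equality uses the change of variable $x = g^{-1}(y)$ together with $g_*\mu = \mu$, the third uses property (3) for $\{\td\mu^\xi_x\}$, and the fourth uses that $g$ is measure preserving. Measurability of $y \mapsto \nu_y(A)$ follows from measurability of $x \mapsto \td\mu^\xi_x(g^{-1}(A))$ and measurability of $g^{-1}$.

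Thus $\{\nu_y\}_{y \in X}$ is a family of conditional probability measures for $g(\xi)$. By the essential uniqueness clause (property (4)), $\nu_y = \td\mu^{g(\xi)}_y$ for $\mu$-a.e.\ $y$. Setting $y = g(x)$ and invoking the measure-preserving property of $g$ once more to transfer the full-measure set from $y$ to $x$, we conclude $g_*(\td\mu^\xi_x) = \td\mu^{g(\xi)}_{g(x)}$ for $\mu$-a.e.\ $x$. There is no substantive obstacle here; the only subtlety is the standard bookkeeping with $\mu$-null sets, handled in the last step by pulling back the null set under the measure-preserving map $g$.
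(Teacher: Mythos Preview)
Your argument is correct and is the standard verification via the uniqueness of conditional measures. The paper does not actually give a proof of this claim; it is presented as a ``straightforward observation,'' so there is nothing substantive to compare against, and your write-up is exactly the kind of routine check the author is implicitly leaving to the reader.
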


\subsection{Pointwise dimension of measures}
For $X$ a metric space, and $\mu$ a locally finite Borel measure, we define the \emph{upper} and \emph{lower pointwise dimension} functions
\begin{align*}
\overline \dim (\mu, x) &:= \limsup_{\epsilon\to 0} \dfrac{\log \mu(B(x, \epsilon))}{\log \epsilon}\\
\underline \dim (\mu, x)& := \liminf_{\epsilon\to 0} \dfrac{\log \mu(B(x, \epsilon))}{\log \epsilon}
\end{align*}
where $B(x, \epsilon)$ denotes the metric ball of radius $\epsilon$ at $x$
and the \emph{pointwise dimension} function
\[\dim (\mu, x) := \lim_{\epsilon\to 0} \dfrac{\log \mu(B(x, \epsilon))}{\log \epsilon}\]
whenever the limit is defined.

For a $C^{1+\alpha}$ diffeomorphism $f\colon M\to M$ and an $f$-ergodic Borel probability measure $\mu$ on $M$, the functions $r, \lambda_i,$ and $\dim E^i$ are a.e.\ constant.  For an ergodic $\mu$ and an $i$ with $\lambda_i>0$, the collection $\{\wtd W^i(x)\}_{x\in \Lambda}$ (and its measure zero complement) provides a partition of $M$.  We say that a measurable partition $\xi$ is \emph{subordinate to $\{\wtd W^i(x)\}_{x\in \Lambda}$} if for $\mu$-a.e.\ $x$ we have $\xi(x)\subset\wtd  W^i(x)$ and $\xi(x) \cap  \wtd W^i(x)$ contains an open neighborhood of $x$ in $\wtd W^i(x)$.  Let $\xi$ be a measurable partition subordinate to $\{\wtd W^i(x)\}_{x\in \Lambda}$ and let $\{\td \mu^\xi_x\}_{x\in M}$ be a family of conditional probability measures.   We define measurable functions
\begin{align*}
\overline \delta^i(x) &:= \overline \dim (\td \mu^\xi_x,x) := \limsup_{\epsilon\to 0} \dfrac{\log \td \mu^\xi_x(B(x, \epsilon))}{\log \epsilon}\\
\underline \delta^i(x) &:= \underline \dim (\td \mu^\xi_x,x) := \liminf_{\epsilon\to 0} \dfrac{\log \td \mu^\xi_x(B(x, \epsilon))}{\log \epsilon}.
\end{align*}
From  \cite{MR819557} we have the equality
\[\overline \delta^i ( x)= \underline \delta^i ( x)\]
 at $\mu$-almost every $x$; we define $\delta^i ( x)$ to be this common value. 
 %If we replace $\xi$ with another $\{\wtd W^i(x)\}_{x\in \Lambda}$-subordinate partition, the functions $\overline \delta^i(x), \underline \delta^i(x),$ and $  \delta^i(x)$ remain unchanged for $\mu$-a.e.\ $x$.  

Since  % of the functions $\dim (\mu, x)$ and 
$\delta^i (\cdot)$ is measurable, the assumption that $\mu$ is ergodic guarantees it is  $\mu$-a.e.\ constant.
In the case that $\mu$ is not ergodic, the functions $\overline \delta^i(x), \underline \delta^i(x)$, and $\delta^i(x)$ are still defined $\mu$-a.e.\ by first passing to an ergodic decomposition (see \cite{MR819557} for details).  

We also define measurable functions for the \emph{stable} and \emph{unstable} dimension of the measure $\mu$:
\[\delta^u (x) = \max\{\delta^i(x)\mid \lambda_i(x) >0\};\]
\[\delta^s (x) = \max\{\delta^i(x)\mid \lambda_i(x) <0\}.\]
A measure $\mu$ is said to be \emph{hyperbolic for $f$} if $\lambda_i(x) \neq 0$ for $\mu$-a.e regular point $x$ and every $0\le i \le r(x)$.  From \cite{MR1709302}, we have that for an ergodic, hyperbolic measure \begin{align}\label{eq:exdim}\dim(\mu) = \delta^u + \delta^s\end{align}
where $\dim(\mu),  \delta^u , \delta^s$ are the constant values attained $\mu$-a.e.\ by the corresponding functions.  
%Note, implicit in \eqref{eq:exdim} is the fact that for $\mu$-a.e.\ $x$ \begin{align*}\overline \dim (\mu, x)= \underline \dim (\mu, x)= \dim (\mu, x).\end{align*}

For $x\in \Lambda$ we write $u(x):= \inf \{0\le j \le r(x) \mid \lambda_j(x) >0\}$.  We say a measure $\mu$ is a \emph{$u$-measure} if 
for any $\{\wtd W^{u(x) }(x)\}$-subordinate measurable partition $\xi$, and corresponding family of conditional probability measures $\{\td \mu ^\xi_x\}$, for $\mu$-a.e.\ $x$ the measure $\td \mu ^\xi_x$ is absolutely continuous with respect to the induced Riemannian volume on $\wtd W^{u(x)}(x)$.  This is equivalent to the property that for $\mu$-a.e.\ $x$
\[\delta^u(x) = \sum_{j\ge u(x)} \dim E^j(x).\]
%which in turn is equivalent to the property that for $\mu$-a.e.\ $x$
%and $j\ge u(x)$
%\[ \dim E^j(x) = \begin{cases} \delta^{j}(x) -\delta^{j+1}(x) &j< r(x)\\[.5em] \delta^j(x)& j =r(x)\end{cases}.\]
We similarly define  \emph{$s$-measures}. 
We note that for $f\colon M\to M$ an Anosov diffeomorphism, the forwards (resp. backwards) SRB measure is the unique $u$- (resp. $s$-) measure for $f$.

We will rely on the following  well known technical result relating the pointwise dimension of a measure and the Hausdorff dimension of a set.  
\begin{prop}[Proposition 2.1 of \cite{MR684248}]
\label{prop:Haus}
Let $\mu$ be a non-atomic, locally finite Borel measure on a manifold and let $\mu(\Lambda)>0$.  Suppose  \[\underline \delta\le \liminf _{r\to 0}\dfrac{\log(\mu (B(x, r)))}{\log r}\le \limsup _{r\to 0}\dfrac{\log(\mu (B(x, r)))}{\log r}\le \overline \delta\] for \emph{every} $x\in \Lambda$.  Then $$\underline\delta \le \dim_H(\Lambda)\le \overline \delta$$ where $\dim_H$ denotes Hausdorff dimension.
\end{prop}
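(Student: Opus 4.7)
The plan is to prove the two inequalities $\dim_H(\Lambda) \le \overline\delta$ and $\dim_H(\Lambda) \ge \underline\delta$ separately; both are instances of the classical correspondence between local measure decay and Hausdorff content, and both reduce to covering arguments carried out in a chart. Throughout I would work locally: since $\mu$ is locally finite, I may restrict to a pre-compact coordinate neighborhood and use Besicovitch's covering theorem in $\R^{\dim M}$.

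For the upper bound, fix $\varepsilon > 0$. The hypothesis gives, for every $x \in \Lambda$, some $r_0(x) > 0$ with $\mu(B(x,r)) \ge r^{\overline\delta + \varepsilon}$ whenever $r < r_0(x)$ (the inequality flips because $\log r < 0$). I would stratify $\Lambda = \bigcup_n \Lambda_n$, where $\Lambda_n := \{x \in \Lambda : r_0(x) > 1/n\}$. Fix $n$ and a pre-compact open $U \supset \Lambda_n$ with $\mu(U) < \infty$. For each $\rho \in (0, 1/n)$, Besicovitch's covering theorem produces a countable cover of $\Lambda_n$ by balls $B(x_i, r_i)$ with $x_i \in \Lambda_n$, $r_i < \rho$, and total multiplicity bounded by a universal constant $C = C(\dim M)$. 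Then
\[ \sum_i r_i^{\overline\delta + \varepsilon} \le \sum_i \mu(B(x_i, r_i)) \le C\, \mu(U) < \infty.\]
Letting $\rho \to 0$ shows that the $(\overline\delta + \varepsilon)$-dimensional Hausdorff measure of $\Lambda_n$ is finite, so $\dim_H(\Lambda_n) \le \overline\delta + \varepsilon$; union over $n$ and then $\varepsilon \to 0$ gives $\dim_H(\Lambda) \le \overline\delta$.

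For the lower bound, fix $\varepsilon > 0$ and set $\Lambda_n := \{x \in \Lambda : \mu(B(x,r)) \le r^{\underline\delta - \varepsilon} \text{ for all } 0 < r \le 1/n\}$. The hypothesis on $\liminf$ gives $\Lambda = \bigcup_n \Lambda_n$, and non-atomicity together with $\mu(\Lambda) > 0$ yields some $n$ with $\mu(\Lambda_n) > 0$. For any countable cover $\{U_i\}$ of $\Lambda_n$ by sets of diameter less than $1/n$, discard those not meeting $\Lambda_n$ and for each remaining $U_i$ pick $x_i \in U_i \cap \Lambda_n$, so $U_i \subset B(x_i, \operatorname{diam} U_i)$ and $\operatorname{diam} U_i \le 1/n$. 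Then
\[ \sum_i (\operatorname{diam} U_i)^{\underline\delta - \varepsilon} \ge \sum_i \mu(B(x_i, \operatorname{diam} U_i)) \ge \mu(\Lambda_n) > 0,\]
so the $(\underline\delta - \varepsilon)$-dimensional Hausdorff measure of $\Lambda_n$ is positive; hence $\dim_H(\Lambda) \ge \dim_H(\Lambda_n) \ge \underline\delta - \varepsilon$, and $\varepsilon \to 0$ finishes.

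The main technical point is the availability of the bounded-multiplicity covering lemma in the upper bound; using the ordinary Vitali $5r$-lemma would inflate each ball by a factor of $5$ before invoking the measure estimate, which is harmless for a Hausdorff-measure \emph{finiteness} conclusion but obscures the argument. Once Besicovitch is in hand the two halves become symmetric: the upper bound converts a lower estimate $\mu(B(x,r)) \gtrsim r^{\overline\delta+\varepsilon}$ into finiteness of Hausdorff measure via countable subadditivity, while the lower bound converts the reverse estimate $\mu(B(x,r)) \lesssim r^{\underline\delta-\varepsilon}$ into positivity of Hausdorff measure via the mass distribution principle. The non-atomicity and local finiteness assumptions enter only to guarantee, respectively, that the level set $\Lambda_n$ in the lower bound has positive measure and that the summation in the upper bound stays finite.
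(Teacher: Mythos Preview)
The paper does not supply its own proof of this proposition; it is quoted from the cited reference \cite{MR684248} and used as a black box. Your argument is the standard and correct one for this classical fact (the upper bound via a bounded-multiplicity covering, the lower bound via the mass distribution principle), so there is nothing in the paper to compare against.

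One small inaccuracy in your final paragraph: the existence of some $n$ with $\mu(\Lambda_n)>0$ in the lower-bound argument follows from $\mu(\Lambda)>0$ and countable additivity alone---non-atomicity plays no role there. Its actual function is to keep the hypothesis non-vacuous: at an atom $x$ one has $\mu(B(x,r))$ bounded below by a positive constant, forcing the pointwise dimension at $x$ to equal $0$ and hence $\underline\delta\le 0$, in which case the lower bound $\dim_H(\Lambda)\ge\underline\delta$ is trivial. This does not affect the validity of your proof.
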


%\subsubsection{Pointwise dimension of measures under transformations}
We consider the behavior of the pointwise dimension of  a measures under a bi-Lipschitz map.  %Let $X$ be a metric space, and
Let $\nu$ and $\mu$ be two locally finite Borel measures on $\R^m$ with $\nu \ll \mu$.  Recall that for a measurable set $A\subset \R^m$, a point $y$ is said to be a $\mu$-\emph{density point of $A$} if \[\lim _{r\to 0}\frac{\mu(B(y,r) \cap A)}{\mu(B(y, r))} = 1.\]
 We say that $y $ is a \emph{bounded $(\nu,\mu)$-density point} if there is some $N\in (0,\infty)$ so that $y$ is both a $\mu$- and $\nu$-density point of the set
\[\left\{x\in \R^m \mid \dfrac{1}{N} \le \dfrac{d \nu }{d \mu}(x)\le N\right\}.\]
We note that $\nu\ll \mu$ implies $\nu$-a.e.\ point is a bounded $(\nu,\mu)$-density point.

\begin{prop}\label{prop:bddRND}
%Let $(X, \mu)$ and $(Y,\nu)$ be metric spaces equipped with locally finite Borel measures.  Let $g\colon U\subset X \to Y$ be a bi-Lipschitz homeomorphism onto its image with $g_*(\mu) \ll \nu$.  Then for each bounded $(g_*(\mu),\nu)$-density point $y \in g(U)$ we have 
Let $\mu$ and $\nu$ be locally finite Borel measures on $\R^m$. Let $g\colon  \R^m \to \R^m$ be a bi-Lipschitz homeomorphism with $g_*(\mu) \ll \nu$.  Then for each bounded $(g_*(\mu),\nu)$-density point $y$ we have 
\begin{enumerate}
%\item $\dim(\nu; y)$ is defined 
\item $\overline{\dim}(\nu,y) = \overline{\dim}(\mu, g\inv(y))$;
\item $\underline{\dim}(\nu,y) = \underline{\dim}(\mu, g\inv(y)).$
\end{enumerate}
\end{prop}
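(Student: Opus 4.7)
The plan is to factor the claimed equality through the push-forward $g_*\mu$, comparing it on one side with $\mu$ via the bi-Lipschitz hypothesis and on the other side with $\nu$ via the bounded density-point hypothesis. So I want to establish, for every small $r$, a two-sided sandwich of the form
\[
  c^{-1}\,\nu(B(y,r)) \le g_*\mu(B(y,r)) \le c\,\nu(B(y,r)) \quad\text{and}\quad \mu(B(g^{-1}(y),r/L)) \le g_*\mu(B(y,r)) \le \mu(B(g^{-1}(y),Lr))
\]
for constants $c,L$ that depend on $y$ (but not on $r$). Once such inequalities are in place, taking $\log$ and dividing by $\log r$ kills the multiplicative constants in the limit, and the bi-Lipschitz comparison kills the change of radius scale because $\log(Lr)/\log r \to 1$ as $r\to 0$. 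Both (1) and (2) then follow at once.

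First I will deal with the bi-Lipschitz side, which is the easy step. Let $L$ be a bi-Lipschitz constant for $g$, so that $B(g^{-1}(y), r/L)\subseteq g^{-1}(B(y,r))\subseteq B(g^{-1}(y), Lr)$. Pushing forward by $g$ gives the second sandwich displayed above, hence
\[
  \overline{\dim}(g_*\mu, y) = \overline{\dim}(\mu, g^{-1}(y)), \qquad \underline{\dim}(g_*\mu, y) = \underline{\dim}(\mu, g^{-1}(y)).
\]

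Second, I will use the bounded $(g_*\mu,\nu)$-density hypothesis to reduce to this. By the assumption there is $N\in(0,\infty)$ and a measurable set $A\subset\R^m$ on which the Radon--Nikodym derivative $d(g_*\mu)/d\nu$ is pinched between $1/N$ and $N$, and such that $y$ is a Lebesgue density point of $A$ with respect to both $g_*\mu$ and $\nu$. For any Borel $E\subseteq A$ one has $N^{-1}\nu(E)\le g_*\mu(E)\le N\nu(E)$. Applying this to $E = B(y,r)\cap A$ and using the density-point conclusions
\[
  \tfrac12\,g_*\mu(B(y,r)) \le g_*\mu(B(y,r)\cap A), \qquad \tfrac12\,\nu(B(y,r)) \le \nu(B(y,r)\cap A)
\]
valid for all sufficiently small $r$, yields the first sandwich above with $c=2N$. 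This again gives matching upper and lower pointwise dimensions of $\nu$ and $g_*\mu$ at $y$. Chaining the two sandwiches completes the proof.

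The only subtle step is extracting the two-sided comparison of $g_*\mu$ and $\nu$ from absolute continuity alone; this is exactly what the bounded density-point hypothesis is designed to do, via the elementary observation that a measurable function bounded above and below on a set of full measure can be localized to arbitrarily small balls around a density point. I do not anticipate any genuine obstacle --- the proof is a routine sandwich argument, and the role of the hypothesis is to ensure the sandwich holds at $y$ even though the Radon--Nikodym derivative is only assumed to be locally bounded on a positive-measure set, not everywhere.
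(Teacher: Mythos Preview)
Your proof is correct and follows essentially the same route as the paper: both arguments factor through $g_*\mu$, use the bi-Lipschitz hypothesis to compare $g_*\mu$ with $\mu$ (via the inclusion of balls), and use the bounded density-point hypothesis to compare $g_*\mu(B(y,r))$ with $\nu(B(y,r))$ on the set where the Radon--Nikodym derivative is pinched. The only cosmetic difference is that you package the comparison as an explicit two-sided sandwich with constant $2N$, whereas the paper shows directly that the ratio $g_*\mu(B(y,r))/\nu(B(y,r))$ stays bounded away from $0$ and $\infty$; the underlying estimates are the same.
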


\begin{proof}

We write $J(y)$ for the  Radon-Nikodym Derivative 
\(J(y) := \dfrac{d g_*\mu}{d\nu}(y).\)
%We have that $J$ is defined $g_*\mu$-a.e., hence $\nu$-a.e..  
For $N\in \N$, we write $V_N := \left\{y\mid \dfrac{1}{N}\le J(y)\le N\right\}$.
Consider the inequality
\begin{align*}
\dfrac{g_* \mu (B (y,r) )}{\nu (B(y,r))} &=    \dfrac{\int_{B(y,r)} J(z)\ d \nu (z)} {\nu (B(y,r))}\\
&\ge   \dfrac{1}{N} \dfrac{\nu (B(y,r)\cap V_N)} {\nu (B(y,r))}
\end{align*}
Since $y$ is a $\nu$-density point of $V_N$ for some $N$, we  have that $\frac{g_* \mu (B(y,r))}{\nu (B(y,r))} $ is bounded away from $0$ as $r\to 0$.

Similarly we have
\begin{align*}
\dfrac{g_* \mu (B(y,r))}{\nu (B(y,r))} 
&= N \dfrac {g_* \mu (B(y,r))}  { \int_{B(y,r)} N \ d \nu (z)}   \\
&\le N \dfrac {g_* \mu (B(y,r))}  { \int_{B(y,r)\cap V_n} J(z) \ d \nu (z)}   \\
&= N \dfrac {g_* \mu (B(y,r))}  {g_* \mu (B(y,r)\cap V_N)}
%{\dfrac{\nu (B(y,r))} {\int_{(B(y,r)} I_{V_c} (z)\ d g_* \mu (z)}}
\end{align*}
which implies $\frac{g_* \mu (B(y,r))}{\nu (B(y,r))} $ is bounded away from $\infty$ as $r\to 0$ since $y$ is a $(g_*\mu)$-density point of $V_N$ for some some $N$.  

In particular  the expression \[\log \left(\dfrac{g_* \mu (B(y,r))}{\nu (B(y,r))}\right)\] is bounded above and below for all sufficiently small $r> 0$, hence \[ \limsup_{r\to 0}\dfrac{\log \left(\frac{g_* \mu (B(y,r))}{\nu (B(y,r))}\right)}{\log r} =  \liminf_{r\to 0}\dfrac{\log \left(\frac{g_* \mu (B(y,r))}{\nu (B(y,r))}\right)}{\log r} = 0.\]

We thus have 
{ \allowdisplaybreaks[4]
\begin{align*}
\overline{\dim}(\nu,y)&:= \limsup_{r\to 0}\dfrac{\log(\nu(B(y,r)))}{\log r}\\ 
&= \limsup_{r\to 0}\dfrac{\log(\nu(B(y,r)))}{\log r}+  \limsup_{r\to 0}\dfrac{\log \left(\frac{g_* \mu (B(y,r))}{\nu (B(y,r))}\right)}{\log r}\\
&= \limsup_{r\to 0}\dfrac{\log(g_* \mu(B(y,r)))}{\log r}
\end{align*}}
and similarly
\[\underline{\dim}(\nu,y) =  \liminf_{r\to 0}\dfrac{\log(g_* \mu(B(y,r)))}{\log r}.\]

Since $g$ is assumed bi-Lipschitz, for each $y$ we may find $L, C>0$ so that $d(y,z)<L$ implies 
	\[\frac{1}{C} d(y,z)\le d(g\inv (y) , g\inv (z))\le C d(y,z). \]
Thus for small enough $r>0$ we have
\begin{align}\label{eq:diminq}
\dfrac{\log(\mu(B(g\inv (y),\frac{r}{C})))}{\log \frac {r}{C} + \log C}
\le\dfrac{\log(g_* \mu(B(y,r)))}{\log r }\le
\dfrac{\log(\mu(B(g\inv (y),Cr)))}{\log (Cr)  - \log C}
.\end{align}
Applying the $\limsup_{r\to 0}$ and $\liminf_{r\to 0}$ operators to both sides of \eqref{eq:diminq} yields the desired results.  %and  we have that $\dim(\nu;y)$ is defined  with the desired equality.  
\end{proof}

\subsection{Dimension, entropy, and Lyapunov exponents}
For $x$ a regular point for a $C^{1+\alpha}$ diffeomorphism, we define functions
 \[\gamma_j (x) := \begin{cases}\delta^{r(x)} (x)& j = r(x),\\
 \delta^j(x)- \delta^{j+1}(x)   & u(x)<j< r(x).  \end{cases} \]  
%Note in the case that $\mu$ is ergodic, the functions $\gamma_j(\cdot)$ are a.e.\ constant.
We have the equality
\begin{align}\label{eq:LYent}
h_\mu (f) =
	\begin{cases} 
	\displaystyle\sum_{\lambda_j>0 } \gamma_j \lambda_j &\mu \text{ ergodic,}\\[2em]
	\displaystyle\int \displaystyle \sum_{\lambda_j(x)>0 } \gamma_j(x) \lambda_j (x) \ d \mu(x) \ \ \ &\mu \text{ non-ergodic,}
	\end{cases}
\end{align}
know as the \emph{Ledrappier-Young entropy formula}. 
 The formula \eqref{eq:LYent} was proved in \cite{MR684248} for $C^2$ surface diffeomorphisms, and in  \cite{MR819557} for general $C^2$ diffeomorphisms.  For a statement and proof in the $C^{1+\alpha}$ setting refer to \cite{MR2348606}.

%%%%%%%%%%%%%%%%%%%%%%%%%%%%%%%%%%%%%%%%%%%%%%%%%%%%%%%%%

%%%%%%%%%%%%%%%%%%%%%%%%%%%%%%%%%%%%%%%%%%%%%%%%%%%%%%%%%
\section{Foliation rigidity}
Let $a\colon \T^2 \to \T^2$ be as in the introduction with $\Fols$ and $\Folu$ the stable and unstable foliations.  Before proving the main results we demonstrate mechanisms under which preservation of an $a$-invariant measure forces the preservation of the dynamical foliations $\Folu$ and $\Fols$.    
\subsection{Rigidity of the slow foliation}
Consider  an $a$-ergodic measure $\mu$ with $h_\mu(a) >0$ and $\lambda_\mu^u \neq - \lambda _\mu^s$.  
%Then either $|\lambda_\mu^u| <| \lambda _\mu^s|$ or $|\lambda_\mu^s| <| \lambda _\mu^u|$.  
By the \emph{slow foliation} we mean the foliation whose corresponding Lyapunov exponent is smaller in  absolute value.  We show that, under the additional hypothesis that $\mu$ has full support, any $g\in \Cmu$ preserves the slow foliation.  For simplicity we assume $|\lambda_\mu^u| <| \lambda _\mu^s|$,  and show $g$ preserves $\Folu$.  

\begin{prop}\label{prop:1}
Let $\mu$ be an $a$- ergodic Borel probability measure with full support and $h_\mu(a) >0$.  
Suppose  \[\lambda_\mu^u + \lambda _\mu^s< 0.\]   Then for all $r\ge 1$ we have \[\Cmu \subset \Cmu[\Folu].\]  
\end{prop}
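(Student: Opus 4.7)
The plan is to prove Proposition \ref{prop:1} by a dimension-theoretic contradiction, combining the Ledrappier--Young entropy formula with the bi-Lipschitz invariance of pointwise dimension from Proposition \ref{prop:bddRND}. First I would record the numerical inequality $\delta^u > \delta^s$: applying \eqref{eq:LYent} to $a$ and to $a\inv$ separately yields
\[
h_\mu(a) \;=\; \delta^u\, \lambda_\mu^u \;=\; -\delta^s\, \lambda_\mu^s,
\]
so both $\delta^u$ and $\delta^s$ are strictly positive, and the hypothesis $\lambda_\mu^u + \lambda_\mu^s < 0$, which reads $\lambda_\mu^u < -\lambda_\mu^s$, forces $\delta^u > \delta^s$. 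By \eqref{eq:exdim} we also have $\dim\mu = \delta^u + \delta^s$.

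Next, suppose for contradiction that $g \in \Cmu \setminus \Cmu[\Folu]$. Since $g$ is $C^1$ and the line field $E^u$ is continuous, there is a $\mu$-positive set of regular points $x$ at which $Dg_x(E^u(x)) \neq E^u(g(x))$; on a further positive-measure subset the $C^r$ curve $g(\locUnst x)$ is genuinely transverse to $\Folu$ at $g(x)$. Fix a measurable partition $\xi$ subordinate to $\Folu$, so that each atom $\xi(x) \subset \locUnst x$ contains an open arc around $x$, with associated conditional probabilities $\{\mu^\xi_z\}$. By definition of $\delta^u$, for $\mu$-a.e.\ $x$ the conditional $\mu^\xi_x$ has pointwise dimension $\delta^u$ at $x$. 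Because $g$ is $C^r$ with $r \ge 1$ it is locally bi-Lipschitz, so Proposition \ref{prop:bddRND} gives $\dim(g_*\mu^\xi_x, g(x)) = \delta^u$. Claim \ref{clm:condtransfer} then identifies $g_*\mu^\xi_x$ with $\mu^{g\xi}_{g(x)}$, the conditional for the image partition $g\xi$, whose atom through $g(x)$ is a piece of the transverse curve $g(\locUnst x)$.

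The contradiction will follow once I establish the complementary dimension statement: for any measurable partition of $\T^2$ whose atoms are smooth curves transverse to $\Folu$ at $\mu$-typical points, the corresponding conditional has pointwise dimension $\delta^s$ at $\mu$-a.e.\ point. Granting this, $\mu^{g\xi}_{g(x)}$ would have pointwise dimension $\delta^s$, contradicting the previous paragraph since $\delta^u > \delta^s$. My plan for the complementary statement is to use the bi-Lipschitz unstable holonomy from Proposition \ref{prop:folreg} to identify each transverse atom with an arc in a local stable leaf, and then to compare the transported measure with the stable conditional $\mu^s$ using the full support of $\mu$ (which makes the holonomies defined near $\mu$-typical points) together with the exact dimension identity $\dim\mu = \delta^u + \delta^s$. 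This transverse-dimension step is the main obstacle: for equilibrium states it is essentially immediate from local product structure, but for a general $a$-ergodic $\mu$ it requires a careful Fubini-type slicing argument that balances the unstable-leaf dimension $\delta^u$ against the total pointwise dimension of $\mu$.
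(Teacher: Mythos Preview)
Your overall strategy---deriving the inequality $\delta^u>\delta^s$ from \eqref{eq:LYent} and then seeking a dimension contradiction via the image foliation $\Gol:=g(\Folu)$---is the same as the paper's, and your step showing that the $\Gol$-conditionals $\mu^{g\xi}_{g(x)}$ have pointwise dimension $\delta^u$ (using Claim \ref{clm:condtransfer} and Proposition \ref{prop:bddRND}) is correct and appears verbatim in the paper's argument.

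The gap is in your ``complementary dimension statement''. As you phrase it---\emph{for any measurable partition by smooth curves transverse to $\Folu$, the conditionals have pointwise dimension $\delta^s$}---the statement is simply false: your own partition $g\xi$ is a counterexample, since you just showed its conditionals have dimension $\delta^u\neq\delta^s$. What you can hope to prove, and what the sketch via unstable holonomy and Fubini slicing actually points toward, is the corresponding bound for the \emph{quotient} (projection) measure rather than for conditionals along an arbitrary transverse lamination. For a measure without local product structure, unstable holonomy need not carry transverse conditionals to stable conditionals, so the ``identify with a stable arc'' plan does not go through for disintegrations; it only controls the projection.

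The paper's proof implements exactly this correction. One defines $\hat\mu$ on a local stable leaf as the push-forward of $\mu$ along $\Folu$, and invokes \cite[Lemma 11.3.1]{MR819557} together with \eqref{eq:exdim} to get $\hat\delta^-(y)\le\delta^s$ for $\mu$-a.e.\ $y$. Then, instead of trying to compute the $\Gol$-conditionals a second time, one uses the already-established fact that those conditionals have dimension $\delta^u$ to bound $\hat\mu$ (more precisely, its restriction to a set $\Gamma^l_R$ where the conditional ball-measures are uniformly controlled) from \emph{below}: the transversality of $\Gol$ to $\Folu$ lets one cover a thin $\Folu$-tube of width $cr$ by $\Gol$-balls of radius $r$, giving $\hat\nu^l_R(W^s_{cr}(y))\le K r^{\delta^u-\eta}$ and hence $\underline\dim(\hat\nu^l_R,y)\ge\delta^u-\eta>\delta^s$. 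The contradiction is thus between two estimates on the \emph{same} quotient measure, not between two incompatible computations of a transverse conditional.
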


\begin{proof}
Let $g\in \Cmu$.  We write $\Gol = g(\Fol^u)$.  If $\Gol \neq \Fol^u$ then there is some open set $V\subset \T^2$ such that 
\begin{itemize}
\item $V$ is a bifoliation chart for $\Folu$ and $\Fols$;
\item $V$ is a  foliation chart for $\Gol$;
\item  for each $x,y\in V$ the intersection $\Gol_V (x) \cap \locUnst[V] y$ % \Folu_V(y)$ 
contains at most one point, and the intersection is transverse.
\end{itemize}

For $y \in V$ we identify $\locStab[V] y $ with the quotient space $V/\Folu_V$.  Define $\hat \mu$ to be the quotient measure on $\locStab[V] y$ given by \[\hat \mu(B) = \mu\big( \locUnst[V] B\big)\] and define the corresponding pointwise dimensions \[\hat \delta^ +(y) = \limsup _{r\to 0} \dfrac{\log \big(\hat \mu (\locStab[r] y)\big)}{\log r}
\quad \quad \quad 
\hat \delta^ -(y) = \liminf _{r\to 0} \dfrac{\log \big(\hat \mu (\locStab[r] y)\big)}{\log r}.\]  Since the unstable holonomies are bi-Lipschitz, by Proposition \ref{prop:bddRND} $\hat \delta^\pm (y) = \hat \delta^\pm (z) $ for $z\in \locUnst[V] y$.  

By \cite[Lemma 11.3.1]{MR819557} we have
\[\hat \delta^-(y) + \delta^u\le \dim (\mu, y)\] for $\mu$-a.e.\ $y$,  whence, by \eqref{eq:exdim} we conclude that \begin{align}\label{eq:1}\hat \delta^-(y)\le \delta^s \end{align} 
for $\mu$-a.e.\ $y$.

Note the hypothesis $\lambda^u_\mu + \lambda^s_\mu<0$ implies by \eqref{eq:LYent} that $ \delta^u-\delta^s>0$.  
Fix $0<\eta< \delta^u-\delta^s$.  We write $\{\td \mu^\Gol_{V,y}\}_{y\in V}$ for the conditional measures associated to the (measurable) partition of $V$ by the leaves of the local foliation $\Gol_V$.    Note that by Claim \ref{clm:condtransfer}, the fact that $g$ is bi-Lipschitz, and Proposition \ref{prop:bddRND}, we have $\dim(\td \mu^\Gol_{V,y}, y) = \delta^u$ for a.e.\ $y\in V$.  
Define \[\Gamma_R^l :=\{x\in V\mid {l\inv} r^{\delta ^u + \eta} \le \td \mu^\Gol_{V,x}(B(x,r))\le {l} r^{\delta ^u - \eta} \quad \text{for all } 0<r<R\}\]
and fix $l$ and $R$ so that $\mu(\Gamma^l_R)>0$.  On $\locStab[V] y$ define a second quotient measure $\hat \nu^l_R$ by 
\[ \hat \nu^l _R (B) := \mu (\locUnst[V] B \cap \Gamma^l_R).\]
Clearly $ \hat \nu^l_R \ll \hat \mu$ hence, by Proposition \ref{prop:bddRND}, for every $x\in V$ and $\hat\nu^l_R$-a.e.\ $y \in \locStab[V] x$ we have $\underline \dim (\hat \nu^l_R, y) = \hat \delta^- (y)$.  
 
Fix such a $y$.  Using   the uniform transversality of the local foliations $\Gol_V$ and $\Folu_V$ and the fact that the unstable holonomies are bi-Lipschitz, we may find a $1>c>0$ so that 
 \[\locUnst[V]{\locStab[cr] y} \subset \bigcup_{z\in \locUnst[V] y} B_{\Gol}(z, r)\] 
for all sufficiently small $r>0$.  Here $B_{\Gol}(z, r)$ denotes that metric ball of radius $r$ at $z$ in the submanifold $\Gol(z)$.  
 Hence
 \begin{align*}
 \hat \nu^l_R(\locStab[cr] y) &=\int_V \td\mu^\Gol_{V,x}\big( \locUnst[V]{\locStab[cr] y}\cap \Gol_V(x) \cap \Gamma^l_R\big) \ d \mu (x)\\
 &\le \int_V 2  {l} r^{\delta ^u - \eta}  \ d \mu (x)\\
& = Kr^{\delta ^u - \eta}.
 \end{align*}
for some $K$ and all sufficiently small $r>0$.  We thus conclude that $$\underline \dim(\hat \nu ^l_R, y) \ge \delta ^u- \eta >\delta ^s$$  hence 
$\hat \delta^-(y)>\delta ^s$ on a set of positive measure contradicting \eqref{eq:1}.  
\end{proof}

%%%%%%%%%%%%%%%%%%%%%%%%%%%%%%%%%%%%%%%%%%%%%%%%%%%%%%%%%

%%%%%%%%%%%%%%%%%%%%%%%%%%%%%%%%%%%%%%%%%%%%%%%%%%%%%%%%%

\subsection{Rigidity of the fast foliation for equilibrium states}

In the case that  $\mu$ is an equilibrium state, we are able to utilize the local product structure of $\mu$ and obtain a  result stronger than Proposition \ref{prop:1}.

\begin{prop}\label{prop:2}
Let $\mu$ be an equilibrium state for a H\"older continuous potential on $\T^2$ (with respect to the dynamics $a$).  Suppose that $\mu$ is neither  the forwards nor backwards SRB measure and satisfies  \[\lambda_\mu^u \neq -\lambda _\mu^s.\]   Then \[\Cmu \subset \Cmu[\{\Fols, \Folu\}]\] for all $r\ge 1$.

%  \cmt{max entropy}
\end{prop}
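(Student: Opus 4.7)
The plan is to strengthen Proposition \ref{prop:1} by leveraging the local product structure of equilibrium states. Without loss of generality assume $\lambda^u_\mu + \lambda^s_\mu < 0$, so that $\Folu$ is the slow foliation. Proposition \ref{prop:1} then shows that every $g \in \Cmu$ preserves $\Folu$, and the remaining task is to show that any such $g$ also preserves $\Fols$. Suppose for contradiction that $g(\Fols) \neq \Fols$ for some $g \in \Cmu$, and set $\mathcal{H} := g(\Fols)$. On a suitably small bifoliation chart $V$ for $(\Folu, \Fols)$ we may then assume that $\mathcal{H}$ restricts to a $C^{1+\alpha'}$ foliation of $V$ transverse to both $\Folu$ and $\Fols$.

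The decisive new ingredient relative to Proposition \ref{prop:1} is the local product structure of $\mu$. For an equilibrium state of a H\"older potential, in a small bifoliation chart the measure $\mu$ is equivalent, up to a bounded H\"older Radon-Nikodym density, to a product of a family of unstable conditionals $\{\mu^u_x\}$ along $\Folu$-leaves and a reference transverse measure along $\Fols$. Equivalently, the $\Fols$-holonomies between nearby $\Folu$-leaves intertwine $\mu^u_x$ and $\mu^u_y$ with a bounded H\"older Radon-Nikodym derivative. Combining Claim \ref{clm:condtransfer} with $g_*\mu = \mu$ and $g(\Folu) = \Folu$, the pushforward of this local product structure by $g$ yields that $\mu|_{g(V)}$ admits a second local product structure with respect to the pair $(\Folu, \mathcal{H})$. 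In particular $\mathcal{H}$-holonomies between $\Folu$-leaves must also intertwine the conditionals $\mu^u_x$ up to a bounded H\"older density.

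To close the argument I would run a dimension/density argument in the spirit of Proposition \ref{prop:1}, now internal to the $\mu^u_x$. Because $\mu$ is neither the forwards nor backwards SRB measure, the Ledrappier-Young formula \eqref{eq:LYent} combined with \eqref{eq:exdim} gives $\delta^u < 1$ and $\delta^s < 1$; in particular $\mu^u_x$ is singular with respect to Lebesgue on the one-dimensional leaf $\Folu(x)$. Composing a $\Fols$-holonomy with the inverse of a $\mathcal{H}$-holonomy between a common pair of $\Folu$-leaves produces a non-trivial bi-Lipschitz self-map of a local unstable leaf preserving $\mu^u_x$ up to a bounded H\"older density. A one-dimensional analogue of the density and transversality computation from Proposition \ref{prop:1}, applied to this self-map together with Proposition \ref{prop:bddRND}, should then contradict the pointwise dimension identity $\dim(\mu^u_x) = \delta^u$.

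The main obstacle is executing this last step cleanly: one must turn the non-trivial H\"older Radon-Nikodym symmetry of $\mu^u_x$ into an honest dimension or entropy contradiction using only H\"older (not smooth) regularity of the density. An alternative route that avoids reducing to self-maps of single unstable leaves is to compare directly the two transverse measure classes along the $\Fols$- and $\mathcal{H}$-transversals produced by the two local product structures, and show that the bounded-density relation they must jointly satisfy is incompatible with $\mathcal{H}$ being transverse to $\Fols$ on a set of positive $\mu$-measure.
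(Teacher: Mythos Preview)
Your setup is correct and matches the paper: reduce to the case $|\lambda^u_\mu|<|\lambda^s_\mu|$, invoke Proposition~\ref{prop:1} so that $g$ preserves $\Folu$, and then use the local product structure of $\mu$ to show that the $\Folu$-conditionals $\mu^u_x$ are intertwined with bounded Radon--Nikodym derivative both by the $\Fols$-holonomies and by the $\mathcal H = g(\Fols)$-holonomies. You also correctly isolate that $\delta^u<1$ (since $\mu$ is not the forwards SRB measure) is what must ultimately be contradicted.

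The gap is in your endgame. Composing an $\Fols$-holonomy with the inverse of an $\mathcal H$-holonomy to obtain a nontrivial bi-Lipschitz self-map of an unstable leaf that preserves $\mu^u_x$ up to a bounded H\"older density does not, by itself, lead anywhere: Proposition~\ref{prop:bddRND} tells you precisely that such maps \emph{preserve} pointwise dimension, so no dimension contradiction can arise from the self-map alone. Nor is there any obvious rigidity statement for measures on $\R$ admitting such a symmetry that would force $\delta^u=1$. You sensed this yourself in calling the step an ``obstacle.''

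The paper closes the argument by a different and much cleaner mechanism. Let $\Upsilon\subset V$ be the set of points $y$ with $\dim(\mu^u_y,y)=\delta^u$; this has full $\mu$-measure. By the product structure and \eqref{eq:holos}, $\Upsilon$ is $\Fols_V$-saturated; by the $g$-invariance of the conditionals (your observation, made precise as Claim~\ref{claim:gpreservesCond}), $\Upsilon$ is also $\mathcal G_V$-saturated, where $\mathcal G=g(\Fols)$. Since $\Fols$ and $\mathcal G$ are transverse in $V$, a set saturated by both contains an open set, hence an arc $I\subset W^u_\delta(x)$ on which $\dim(\mu^u_y,y)=\delta^u$ holds at \emph{every} point. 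Now Proposition~\ref{prop:Haus} applies and gives $1=\dim_H(I)\le\delta^u$, contradicting $\delta^u<1$. The point you were missing is that double saturation upgrades an almost-everywhere statement to an everywhere-on-an-interval statement, and it is this upgrade that lets the Hausdorff-dimension comparison bite.
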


\subsubsection{Product structure of equilibrium states}
Before we prove Proposition \ref{prop:2}, we first recall some facts about equilibrium states invariant under uniformly hyperbolic dynamics.  
Let $\Lambda$ be a compact, locally maximal, non-wandering, topologically transitive, hyperbolic set 
for a $C^{1}$ diffeomorphism $f$ of a manifold (i.e. a basic set).   (See, for example, \cite{MR1326374} for relevant definitions.)  Let $\phi\colon \Lambda \to \R$ be a H\"older continuous function and $\mu$ be the associated equilibrium state.  

Recall that $\Lambda$ satisfies a \emph{local product structure}; that is, there exist $0<\delta<\epsilon$ with the property that for all $x,y\in \Lambda$ with $d(x,y)\le 2 \delta$ the intersection\[\locUnst x \cap \locStab y\] contains exactly one point and \[\locUnst x \cap \locStab y\subset \Lambda.\]   For such $x,y$ we write \[[x,y] := \locUnst x \cap \locStab y.\]
Given $x^s\in \locStab [\delta] x$  and $x^u\in \locUnst [\delta] x$ we define the local holonomies 
\begin{align*}&h_{x, x^s} ^s \colon \locUnst[\delta] x\to \locUnst{x^s} &h_{x, x^u} ^u \colon& \locStab[\delta] x\to \locStab{x^u}\\
& h_{x, x^s} ^s \colon z \mapsto [x^s, z]    &h_{x, x^u} ^u \colon &z \mapsto [ z, x^u].
\end{align*}
The following theorem describes a local product structure for equilibrium states.  

\begin{thm}\label{thm:eqstates}
Let $\mu$ be the equilibrium state associated to a H\"older continuous function $\phi$ on $\Lambda$.  Then for each $\sigma \in \{s,u\}$ there exists a family of measures $\{\mu^\sigma_x\}_{x\in \Lambda}$ such that
\begin{enumerate}[label={\alph*)}, labelindent=\parindent, ref=\ref{thm:eqstates}(\alph*)]
\item \label{thm:transmsrunique} the family $\{\mu^\sigma_x\}_{x\in M}$ is uniquely determined up to scalar multiplication and  $\mu^\sigma_x = \mu^\sigma_y$ for $x\in W^\sigma (y)$;

\item $\mu^\sigma_x$ is supported on $W^\sigma(x) \cap \Lambda$ and %if $\mu$ has positive entropy then 
$\mu^\sigma_x (U) >0$ for any non-empty open subset of $W^\sigma(x) \cap \Lambda$;

\item  $f_*\mu^\sigma_x $ and $\mu^\sigma_{f(x)} $ are equivalent with \label{thm:dynjac}
	\begin{align}  
	\label{eq:dynu}\dfrac{d (f_*\mu^u_x)}{d \mu^u_{f(x)}}(f(y)) &= e^{-\phi(y) + P(\phi)}\\ 
	\label{eq:dyns}	\dfrac{d (f_*\mu^s_x)}{d \mu^s_{f(x)}}(f(y)) &= e^{\phi(f(y)) - P(\phi)} \end{align} 
for $y\in W^\sigma(x)$, where $P(\cdot)$ denotes the pressure functional (defined, for example, in \cite{MR1326374}); % and $y \in W^\sigma(x)$;

\item \label{thm:holojac} for $x^s\in \locStab [\delta] x$  and $x^u\in \locUnst [\delta] x$  we have  
\begin{align}
\label{eq:holos} \dfrac { d \mu^u_{x^s}}{d ((h^{s}_{x,x^s})_* \mu^u_x)} (\cdot) &= e^{\omega_{x}^u(\cdot)} \\
\label{eq:holou} \dfrac { d \mu^s_{x^u}} {d ((h^{u}_{x,x^u})_* \mu^s_x)}(\cdot) &= e^{\omega_{x}^s(\cdot)} 
\end{align}
where 
	\begin{align}
	\omega_{x}^u(y)&:= \sum_{i = 0 } ^\infty\phi (f^i([x,y]))  -  \phi (f^i(y)) \label{eq:omegaU} \\
	\omega_{x}^s(y)&:= \sum_{i = 0 } ^\infty   \phi (f^{-i}([y,x]))-  \phi (f^{-i}(y)) \label{eq:omegaS};
	\end{align}
\item  \label{thm:prodstruct}  after suitable normalization, on local charts $[\locStab[\delta]x \cap \Lambda  , \locUnst[\delta]x\cap \Lambda]$ we have the product decomposition
\begin{align}\label{eq:prodstr}d\mu (\cdot) = e^{\omega_{x}^u(\cdot) + \omega_{x}^s(\cdot ) +\phi(\cdot)} \  d (\mu_x^u\times \mu_x^s) ([x,\cdot], [\cdot,x]);\end{align}

\item \label{thm:conditionals} for any measurable partition $\xi$ subordinate to $\Fol^u$, up to normalizing constants, the family 
	 \begin{align*}
	 \{ e^{\omega_{x}^s +\phi}  \mu^u_x\}\label{eq:cond}
	 \end{align*} 
provides a family of conditional probability measures $\td \mu^\xi_x$.

\end{enumerate}
\end{thm}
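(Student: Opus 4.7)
The plan is to prove Theorem \ref{thm:eqstates} via the Bowen-Ruelle-Sinai thermodynamic formalism, applied through a Markov partition. First I would fix a Markov partition $\mathcal{R}$ for $(f,\Lambda)$ of diameter small enough to lie within the local product structure scale, yielding a semi-conjugacy $\pi \colon \Sigma \to \Lambda$ from a topologically transitive subshift of finite type $(\sigma, \Sigma)$; the H\"older potential $\phi$ lifts to a H\"older potential $\tilde\phi$ on $\Sigma$. The subshift $\Sigma$ decomposes locally as a product $\Sigma^+ \times \Sigma^-$ corresponding to the unstable and stable directions. Applying the Ruelle-Perron-Frobenius theorem to the one-sided quotient $(\sigma^+, \Sigma^+)$ with potential $\tilde\phi$ yields a unique (up to scalar) positive eigenmeasure $\nu^u$ for the transfer operator $\mathcal{L}_{\tilde\phi}$ with eigenvalue $e^{P(\phi)}$. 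Pushing $\nu^u$ back to $\Lambda$ through $\pi$ and restricting to local unstable sets produces the family $\{\mu^u_x\}$; the family $\{\mu^s_x\}$ is built symmetrically from the transfer operator for $f^{-1}$ with potential $\phi \circ f^{-1}$.

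Uniqueness and leafwise constancy in (a), together with positivity on non-empty opens in (b), follow immediately from the uniqueness and strict positivity statements in the Ruelle-Perron-Frobenius theorem combined with topological transitivity. The dynamical Jacobian identities (\ref{eq:dynu})--(\ref{eq:dyns}) in (c) are simply the defining conformality relation $\mathcal{L}_{\tilde\phi}^* \nu^u = e^{P(\phi)} \nu^u$, translated back to $\Lambda$ via the coding. For the holonomy Jacobians in (d), I would exploit that when $y \in \locUnst[\delta]x$ and $x^s \in \locStab[\delta]x$, both iterates $f^n(y)$ and $f^n([x^s,y])$ lie on a common local stable manifold and satisfy $d(f^n(y), f^n([x^s,y])) \to 0$ exponentially. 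Applying (c) iteratively and telescoping as $n \to \infty$ expresses the desired Radon-Nikodym derivative as the series (\ref{eq:omegaU}); convergence is guaranteed by H\"older continuity of $\phi$ and exponential contraction along stable leaves, and (\ref{eq:holou})--(\ref{eq:omegaS}) follow by time reversal.

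For the product structure (e), I would argue that on a local product chart the measure $e^{\omega^u_x + \omega^s_x + \phi} \, d(\mu^u_x \times \mu^s_x)$ pushed forward via $(y,z) \mapsto [y,z]$ is $f$-invariant with the same pressure as $\mu$, hence equals $\mu$ by uniqueness of the equilibrium state. The precise form of the correction factor is forced by demanding that the expression transform consistently under $f$ using (c) and under change of basepoint in the chart using (d), with the $\phi$ term arising from the seam between the forward Birkhoff sum implicit in $\omega^u_x$ and the backward sum in $\omega^s_x$. Statement (f) then follows by disintegrating (\ref{eq:prodstr}) along the unstable foliation and invoking the essential uniqueness of conditional measures. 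The main technical obstacle is the cocycle bookkeeping: one must verify that the identities $\omega^u_x(y) - \omega^u_{x'}(y) = \omega^u_x(x')$ for $x' \in \locUnst x$ and its dual for $\omega^s$ hold exactly, guaranteeing that the product formula is well-defined independently of the basepoint used to trivialize the chart, and that the normalizing scalars in (a) can be chosen coherently across overlapping charts. Once this cocycle structure is in place, the remaining computations reduce to tracking normalization constants.
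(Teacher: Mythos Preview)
Your proposal is correct and follows essentially the same route as the paper's own sketch: derive the holonomy Jacobians (d) from the dynamical Jacobians (c) by telescoping, verify the cocycle identity $\omega^u_x(y) = \omega^u_x([x',y]) + \omega^u_{x'}(y)$ to ensure basepoint-independence of the product formula, check $f$-invariance of the right-hand side of \eqref{eq:prodstr} directly from (c), and read off (f) by disintegrating (e). The only difference is cosmetic: you construct the leaf measures explicitly via a Markov partition and the Ruelle--Perron--Frobenius theorem, whereas the paper simply cites \cite{MR1661262} and \cite{MR1828477} for the existence of families satisfying (c) and then proceeds as you do; the paper also notes explicitly that it is offering only a sketch, so your more constructive account is if anything closer to a complete argument.
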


Complete proofs of Theorem \ref{thm:eqstates} are missing from the literature, but partial proofs and sketches exist.  We contribute another sketch here.

\begin{proof}[Proof sketch of Theorem \ref{thm:eqstates}]
The existence of a  family of measure satisfying \eqref{eq:dynu}--\eqref{eq:holou} is derived in \cite{MR1661262}.  See also \cite[Proposition 2.3]{MR1828477}. 
In fact the properties in Theorem \ref{thm:dynjac} may be taken as defining properties for the family of measures from which \eqref{eq:holos} and \eqref{eq:holou}  are easily derived.  

From \eqref{eq:omegaU} we derive the identity \[\exp(\omega^u_x(y)) = \exp(\omega^u_x([x',y]))\exp(\omega^u_{x'}(y)). \]
By Theorem \ref{thm:holojac} we have
\[ d\mu^u_{x'} ([x', \cdot]) =  \exp(\omega^u_x([x',\cdot])) d\mu^u_{x} ([x, \cdot])\]
hence we verify the expression on the right hand side of \eqref{eq:prodstr} is well defined; that is, the measure is defined independent of the choice of base point $x$.  Furthermore, by Theorem  \ref{thm:dynjac} one verifies that that the measure defined on the right hand side of \eqref{eq:prodstr} is invariant under $f$.  
Indeed we have 
\begin{align*}
d(f_*\mu )(f(y)) &= e^{\omega_{x}^u(y) + \omega_{x}^s(y ) +\phi(y)} \  d (\mu_x^u\times \mu_x^s) ([x,y], [y,x])\\
&= e^{\omega_{x}^u(y) + \omega_{x}^s(y ) +\phi(y)} \  d (f_*(\mu_{x}^u\times \mu_{x}^s) )([f(x),f(y)], [f(y),f(x)])\\
&= e^{\omega_{x}^u(y) + \omega_{x}^s(y ) +\phi(y)} e^{-\phi([x,y]) + \phi(f([x,y]))}\  d (\mu_{f(x)}^u\times \mu_{f(x)}^s) ([f(x),f(y)], [f(y),f(x)])\\
&=e^{\omega_{f(x)}^u(f(y)) + \omega_{f(x)}^s(f(y) ) +\phi(f(y))} \  d (\mu_{f(x)}^u\times \mu_{f(x)}^s) ([f(x),f(y)], [f(y),f(x)])\\
&= d \mu (f(y))
\end{align*}
Theorem \ref{thm:conditionals} then follows from  \eqref{eq:prodstr}. 
\begin{comment}
Indeed on a chart $V = [\locStab[\delta]x \cap \Lambda  , \locUnst[\delta]x\cap \Lambda]$ and for 
$y\in \locStab[\delta]x \cap \Lambda  , z\in  \locUnst[\delta]x\cap \Lambda$ writing \[g(y,z) := e^{\omega_{x}^u([z,y]) + \omega_{x}^s([z,y]) +\phi([z,y])}\] we have

\begin{align*}
\mu (A)&= \int\limits_{\locStab[\delta]x } \int \limits_{ \locUnst[\delta]x} 1_A([z,y]) 
g(y,z) \ d\mu^u_x (y)  \ d\mu^s_x (z) \\
&= \int\limits_{\locStab[\delta]x } \int \limits_{ \locUnst[\delta]x} 
\dfrac{1}{\int \limits_{\xi(y)} g(y',z) \ d\mu^u_x (y') }
\left(\int \limits_{\xi(y)} 1_A([z,y'])  g(y',z) \ d\mu^u_x (y') \right)
g(y,z) \ d\mu^u_x (y)  \ d\mu^s_x (z) \\
&= \int\limits_{\locStab[\delta]x } \int \limits_{ \locUnst[\delta]x} 
\dfrac{1}{c(\xi(y))}
\left(\int \limits_{\xi(y)\cap A} 
 e^{\omega_{x}^u([z,y']) + \omega_{x}^s([z,y']) +\phi([z,y'])}
\ d\mu^u_x (y') \right)
g(y,z) \ d\mu^u_x (y)  \ d\mu^s_x (z) \\
\end{align*}
\end{comment}
\end{proof}

We note that the uniform hyperbolicity of $f$ and the H\"older continuity of $\phi$ ensures $\omega_{x}^\sigma(y)$ is well defined; furthermore we have that $\omega_{x}^\sigma(y)$ is continuous in both arguments $x$ and $y$.

By Claim \ref{clm:condtransfer} and Theorem \ref{thm:conditionals}, for a.e.\ $x\in \Lambda$ we expect 
\[f_*(e^{\omega_{x}^s +\phi}  \mu^u_x) = K e^{\omega_{f(x)}^s +\phi\circ f}  \mu^u_{f(x)}\]
for some constant $K$.  We check that $K= e^{P(\phi) +\phi(f(x))}$ works.   Note that even for $x'\in \unst x\cap \Lambda$ the measures $e^{\omega_{x}^s +\phi}  \mu^u_x$ and $e^{\omega_{x'}^s +\phi}  \mu^u_{x'}$ differ by the constant factor of $e^{\omega_x^s(x')}$, hence it is expected that the rescaling $K$ will depend on the point $x$.

\subsubsection{Proof of Proposition \ref{prop:2}}
We return to the setting where $\Lambda = \T^2$, $a$ is an Anosov diffeomorphism, and $\mu$ is assumed to be an equilibrium state for a H\"older continuous potential $\phi$. 
By passing to $a\inv$ if necessary we may assume that $|\lambda^u_\mu|<|\lambda^s_\mu|$ whence Proposition \ref{prop:1} implies that any $g\in \Cmu$ preserves $\Folu$.  We use the local product structure of $\mu$ to show that $g$  preserves $\Fols$ under the additional assumption that $\mu$ is not the forwards SRB measure.  

Fix an $r\ge 1$ and $g\in \Cmu$.   We write $\Gol = g(\Fols)$.  
Using that the stable holonomies are bi-Lipschitz and that the Radon-Nikodym Derivatives in Theorem \ref{thm:holojac} are bounded we obtain $$\dim (\mu^u_x, x) =\dim (\mu^u_{x'}, x')$$ for any $x'\in W^s(x)$, assuming $\dim (\mu^u_x, x)$ is defined.  That is, the set of points on which $\dim (\mu^u_x, x)$ is constant is $\Fols$-saturated.  We show this set of points is also $\Gol$-saturated via the following claim.

\begin{claim}\label{claim:gpreservesCond}
 %Let $\mu$ be an equilibrium state and 
Let $g\in \Cmu$.  Then for every $x$ there exists $K>0$ so that 
\begin{align*}\label{eq:gjac}\dfrac{d (g_* \mu^u_{g\inv x})}{d \mu^u_x }(y) = K \dfrac{\exp( \omega^s_x(y))+ \phi)}{\exp(\omega^s_{g\inv (x)}(g\inv(y)) + \phi (g\inv(y)) )}.\end{align*}
In particular, $\mu^u_x $ is equivalent to $g_* \mu^u_{g\inv x}$ with Radon-Nikodym derivative bounded away from $0$ and $\infty$. 
\end{claim}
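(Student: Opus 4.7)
The plan is to combine the transfer rule for conditional measures (Claim \ref{clm:condtransfer}) with the explicit description of conditional measures of $\mu$ provided by Theorem \ref{thm:conditionals}. By Proposition \ref{prop:1} (after replacing $a$ with $a\inv$ if necessary so that $|\lambda^u_\mu|<|\lambda^s_\mu|$), the diffeomorphism $g$ maps leaves of $\Folu$ to leaves of $\Folu$. Consequently, for any measurable partition $\xi$ subordinate to $\Folu$, the image $g(\xi)$ is again subordinate to $\Folu$.

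By Theorem \ref{thm:conditionals} there exist measurable normalizing functions $c_\xi$ and $c_{g(\xi)}$ such that, for $\mu$-a.e.\ $w$,
\[\td\mu^\xi_w = c_\xi(w)\inv\, e^{\omega^s_w+\phi}\mu^u_w\big|_{\xi(w)},\qquad \td\mu^{g(\xi)}_{g(w)} = c_{g(\xi)}(g(w))\inv\, e^{\omega^s_{g(w)}+\phi}\mu^u_{g(w)}\big|_{g(\xi)(g(w))}.\]
Since $g_*\mu = \mu$, Claim \ref{clm:condtransfer} provides $g_*\td\mu^\xi_w = \td\mu^{g(\xi)}_{g(w)}$ for $\mu$-a.e.\ $w$.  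Substituting the two expressions above and relabeling $x = g(w)$, a straightforward rearrangement yields
\[\dfrac{d(g_*\mu^u_{g\inv x})}{d\mu^u_x}(y) = K(x)\cdot\dfrac{\exp(\omega^s_x(y)+\phi(y))}{\exp(\omega^s_{g\inv x}(g\inv y)+\phi(g\inv y))}\]
on each element of $g(\xi)$ lying in $W^u(x)$, for $\mu$-a.e.\ $x$, where $K(x) := c_\xi(g\inv x)/c_{g(\xi)}(x)$.  Distinct partition elements covering $W^u(x)$ are forced to share the same constant by the global uniqueness up to scalar of the family $\{\mu^u_x\}$ provided by Theorem \ref{thm:transmsrunique}, so a single $K(x)$ suffices on each leaf.

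To promote the identity from $\mu$-a.e.\ $x$ to every $x \in \T^2$, we exploit that $\mu$ has full support and that the right-hand side is continuous in $x$: the functions $\omega^s$ and $\phi$ are H\"older continuous on $\T^2$, and the family $\{\mu^u_x\}$ depends appropriately continuously on the basepoint in the construction of Theorem \ref{thm:eqstates}.  The boundedness of the Radon-Nikodym derivative away from $0$ and $\infty$ is then immediate from the continuity, hence global boundedness, of $\omega^s$ and $\phi$ on the compact torus.  The most delicate point of the plan is exactly this promotion from almost every $x$ to every $x$; it requires carefully combining the full support of $\mu$, continuity of the ingredients in Theorem \ref{thm:eqstates}, and the freedom to choose the partition $\xi$ adapted to each specific $x$.
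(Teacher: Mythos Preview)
Your proposal is correct and follows essentially the same strategy as the paper: combine Claim~\ref{clm:condtransfer} with Theorem~\ref{thm:conditionals} to obtain the Radon--Nikodym formula $\mu$-a.e., then use full support and continuity to upgrade to every $x$. The one place where the paper is more concrete is precisely the step you flagged as delicate: rather than appealing abstractly to ``continuous dependence of $\{\mu^u_x\}$ on the basepoint,'' the paper works in a local bifoliation chart $V=[\locStab[\delta]x,\locUnst[\delta]x]$, saturates an arbitrary open test set $U\subset\locUnst[\delta]x$ by the image foliation $\Gol=g(\Fols)$ to form $T=\Gol_V(U)$, and then compares the two \emph{scalar} functions
\[
y\mapsto \int_{T_y} e^{\omega^s_y+\phi}\,d\mu^u_y,\qquad y\mapsto \int_{T_y} e^{\omega^s_{g\inv y}\circ g\inv+\phi\circ g\inv}\,d(g_*\mu^u_{g\inv y}),
\]
each normalized by the corresponding integral over the full local leaf $\locUnst[V]y$. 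These four functions are visibly continuous in $y$ (continuity of $\omega^s$, $\phi$, the holonomy Jacobians \eqref{eq:holos}, and non-atomicity of $\mu^u$), so a.e.\ agreement plus full support gives agreement at \emph{every} $y$, and varying $U$ recovers the full measure identity with $K(x)=c_2(x)/c_1(x)$. This device cleanly avoids the issue that your $K(x)=c_\xi(g\inv x)/c_{g(\xi)}(x)$ is \emph{a priori} only measurable, since the partition $\xi$ is only measurable.
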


\begin{proof}
We continue to write $\Gol = g(\Fols)$.  Fixing an $x\in \T^2$ we may find a $\delta>0$ small enough so that for the local chart \[V =[\locStab[\delta] x , \locUnst[\delta] x ]\]
and all $y,z\in V$ the intersection \(\Gol_V(y) \cap \locUnst[V] z\) contains at most one point.  Fix an open set $U\subset \locUnst[\delta] x$ and let \[T := \Gol_V(U).\]  For $y\in V$ 
write $T_y : = T \cap \locUnst[V]y$.   See Figure \ref{fig:1}.  

\begin{figure}[h]\label{fig:1}
	\begin{center}
	\begin{pspicture}(-2,-2.2)(2,2.3)
	\psset{unit=1.01in}
%	  \psaxes[linecolor=gray,labels=none,]{-}(0,0)(-1,-1)(1,1)
%\psbezier[linewidth=1pt]{->}(0,0)(0,.5)(.3,4)(.3,6) \psbezier[linewidth=1pt]{->}(0,0)(0,-.5)(-.3,-4)(-.3,-6)
%\psbezier[linewidth=1pt]{->}(0,0)(4,0)(4,.5)(6,.5)
%	 {\parametricplot[algebraic,arrows=-,linewidth=1.2pt]{-1}{1} {tan(t)|.17*t }}

\def\vh{.05}
\def\frsize{.5pt}
\def\grey{.4}
\definecolor{MyColor}{rgb}{\grey,\grey,\grey}

%%%    Frame
\pscustom[linewidth=\frsize,showpoints=true]{
	\pscurve(-1,-1)(-1.003,-.85)(-1,-0.77655048)(-.99,-.4)(-1,\vh)(-1,1) 
	\pscurve[liftpen=1,showpoints=true](-1,1)(-.4,1.03)(\vh,1)(.4,.973)(.85,1.0110819)(.95,1.0110819)(1,1) 
	\pscurve[liftpen=1](1,1)(1.005,.8)(1, 0.612415196)(.993,.4)(1,\vh)(1.02,-.4)(1,-1) 
	\pscurve[liftpen=1](1,-1)(.4,-.98)(\vh,-1)(-.4,-1.02)(-.65,-1.0110819)(-.85,-1.0050)(-1,-1) 
	}

%%%   T
\pscustom[linewidth=1pt,,fillstyle=solid,fillcolor=lightgray]{
 \parametricplot[algebraic,arrows=-]{-.8}{1.05} {t -.2 |  .04*sin(1.5*3.14*t) +t}
	\pscurve[liftpen=1](.85,1.0110819)(.95,1.0110819)(1,1) 
	\pscurve[liftpen=1](1,1)(1.005,.8)(1, 0.612415196)
	
 \parametricplot[liftpen=1,algebraic,arrows=-,]{.6}{-1.05} {t +.4 |  .04*sin(1.5*3.14*t) +t}
	\pscurve[liftpen=1](-.65,-1.0110819)(-.85,-1.0050)(-1,-1) 
	\pscurve[liftpen=1](-1,-1)(-1.003,-.85)(-1,-0.77655048)
}

%%%% G-foliation
\multido{\N=-1.2+.1}{24}{	 	 {\parametricplot[algebraic,arrows=-,linecolor=MyColor,linewidth=.3pt]{-1.1}{1.1} {t +\N |  .04*sin(1.5*3.14*t) +t}}}

%%%% U and T_y

  %%W^u(y)
	 {\parametricplot[linewidth = \frsize, algebraic,arrows=-]{-1}{1} {t|.05*t^2 + .02*sin(3.14*t)+.5}}

\pscustom[linewidth=1.5pt]{
	 {\parametricplot[algebraic,arrows=-]{-.21}{.42} {t|.05*t^2 + .02*sin(3.14*t) }}
	  {\parametricplot[liftpen=2,linewidth = .8pt, algebraic,arrows=-]{.29}{.93} {t|.05*t^2 + .02*sin(3.14*t)+.5}}
}

%%%  Cross  %%%%
\pscustom[linewidth=\frsize]{

	 {\parametricplot[liftpen=2,algebraic,arrows=-]{-1}{1} {t|.05*t^2 + .02*sin(3.14*t) }}
	 {\parametricplot[liftpen=2,algebraic,arrows=-]{-1}{1} {.05*t^2 + .02*sin(3.14*t) |t}}
}

%%%%   Labels
	\psdots[dotsize=3pt](0,0)(-.12,.495)

	\uput{.031}[130](0,0){$x$}
	\uput{.031}[130](-.12,.495){$y$}
	\uput{.031}[60](.1,0){$U$}

	\uput{.031}[0](-.5,-.5){$T$}
	\uput{.031}[60](.6,.53){$T_y$}

	\uput{.031}[50](-.83,.51){$W^u_V(y)$}
	\uput{.031}[50](-.83,0){$W^u_\delta(x)$}
	\uput{.031}[0](0,-.83){$W^s_\delta(x)$}

	\uput{.031}[0](1.4,.83){$\mathcal G$}

%	 {\parametricplot[algebraic,arrows=-,linewidth=1.2pt]{-1}{1} {t|- t^5-.12 *t}}

%	 {\parametricplot[algebraic,arrows=-,linewidth=1.2pt]{-4}{4} {t |-sin(t)+ .3*t }}

	\end{pspicture}
	\end{center}
	\caption{The local chart $V$} \label{fig:1}
\end{figure}

Using the continuity the Radon-Nikodym derivatives in \eqref{eq:holos}  % Theorem \ref{thm:holojac}
 and the fact that the measures $\mu^u_x$ are non-atomic, we have that each of the maps
\begin{align*}
j_{1, U} &\colon y\mapsto \int_{T_y} \exp( \omega_{y}^s +\phi)\ d \mu^u_y;\\
j_{2, U} &\colon y \mapsto \int_{T_y} \exp(  \omega^s_{g\inv (y)}\circ g\inv+\phi \circ g\inv) \ d g_*(\mu^u_{g\inv(y)});\\
c_1&\colon y\mapsto \int_{\locUnst[V] y} \exp( \omega^s_y+\phi)\ d \mu^u_y; \\
c_2&\colon y\mapsto \int_{\locUnst[V] y} \exp(  \omega^s_{g\inv (y)} \circ g\inv +\phi\circ g\inv) \ d g_*(\mu^u_{g\inv(y)})
\end{align*}
is a continuous map $V \to \R^{}$; furthermore, $c_1$ and $c_2$ are bounded away from $0$ and $\infty$.
By Claim \ref{clm:condtransfer} and  Theorem \ref{thm:conditionals} the equality  \begin{align}\dfrac{j_{1, U}(y)}{c_1(y)} = \dfrac{j_{2, U}(y)}{c_2(y)}\label{eq:44}\end{align}holds for every $y$ in a  subset of $ V$ of full measure.  
The fact that the measure $\mu$ has full support implies that \eqref{eq:44} holds on a dense subset of $ V$. 
We thus obtain  \[{j_{2, U}(y)} = \dfrac{c_2(y)}{c_1(y)}{j_{1, U}(y)}\]  for \emph{every} $y\in V$ since each of the functions $j_{1, U}, j_{2, U} $ and  $\dfrac{c_2(y)}{c_1(y)}$ is continuous. Since the open set $U$ was arbitrary, this establishes the claim with $K(x) = \dfrac{c_2(x)}{c_1(x)}$.
\end{proof}
Note that the constant $K(x) = \dfrac{c_2(x)}{c_1(x)}$ is independent of the choice of the local chart $V$  due to  Theorem \ref{thm:conditionals} and Claim \ref{clm:condtransfer}.

To complete the proof of Proposition \ref{prop:2} we choose a measurable partition $\xi$, subordinate to the foliation $\Folu$, and comprised of half open curves.  More precisely for each $x\in \T^2$ there is an embedding \[\gamma_x\colon [0,1)\to W^u(x)\] with $\xi(x) = \gamma_x\big([0,1)\big)$.  Call $\gamma_x\big((0,1)\big)$ the \emph{interior} of $\xi(x)$ and $\gamma_x(0)$ the \emph{endpoint} of $\xi_x$.  We may additionally choose $\xi$ so that the set of endpoints has measure zero.  
 
 For the partition $\xi$, we fix the family of conditional measures $\{\td \mu_x^\xi\}$ given by an appropriate scaling of the family $\{e^{\omega_{x}^s +\phi}  \mu^u_x  \}$  as guaranteed by Theorem \ref{thm:conditionals}.
Since the function $ e^{\omega_{x}^s+ \phi}$ is locally bounded away from $0$ and $\infty$,  for $y$ in the interior of $\xi(y)$ we have \[\dim(\td \mu^\xi_y, y) = \dim (\mu^u_y, y).\] %For $y$ an endpoint $\xi(y)$ we have \[\dim(\td \mu^\xi_y, y) \ge \dim (\mu^u_y, y).\]

%Again we write $\Gol =g(\Fols)$. 
Supposing $\Gol\neq \Fols$, we may find an $x\in \T^2$ and a $\delta>0$ so that for $V = [\locStab[\delta] x, \locUnst[\delta] x]$ and all $y,z\in V$ the intersections
\[\Gol_V(z) \cap \locUnst[V] y \quad \text{and} \quad\Gol_V(z) \cap \locStab[V] y\]
contain at most one point and are transverse.  

Let $\Upsilon\subset V$ denote the set of all points such that $\dim(\mu^u_x, x) = \delta^u$.  Then  $\Upsilon$ has full measure in $V$.  
By \eqref{eq:holos} and the argument preceding Claim \ref{claim:gpreservesCond} we see that $\Upsilon$ is $\Fol^s_V$-saturated.  On the other hand, Claim \ref{claim:gpreservesCond} and a similar argument ensures that $g\inv(\Upsilon)$ is $\left(\Fols_{g\inv (V)}\right)$-saturated, whence  $\Upsilon$ is $\Gol_V$-saturated.
Thus $\Upsilon$ contains an open set; in particular $\Upsilon$ contains a curve $I\subset \locUnst [\delta] x$.
We may further assume that $I$ is contained in the interior of $\xi(x)$ whence we obtain that for \emph{every} $y\in I$ 
\[\dim(\td \mu^\xi_y, y) = \dim (\mu^u_y, y) = \delta^u.\]  
By  Proposition \ref{prop:Haus} we obtain that $1 = \dim_{H}(I) \le \delta^u$, where $\dim_{H}(I)$ denotes the Hausdorff dimension of the set $I$.  We thus obtain contradiction unless $\mu$ is the forwards SRB measure for $a$.  
\qed

%%%%%%%%%%%%%%%%%%%%%%%%%%%%%%%%%%%%%%%%%%

%%%%%%%%%%%%%%%%%%%%%%%%%%%%%%%%%%%%%%%%%%

\section{Proof of Theorem \ref{thm:2}}
\newcommand{\E}{\mathcal E}

Recall our fixed notation: $a\colon \T^2 \to \T^2$ a $C^{\theta}$ Anosov diffeomorphism,  $h\colon \T^2 \to \T^2$ bi-H\"older, and $A \in \GL(2, \Z)$ such that \[h\circ a \circ h\inv =  L_A.\]  We fix $\mu$ as in Theorem \ref{thm:2}, and $r\ge 1+\alpha$ for some $\alpha>0$.  
By passing to $a\inv$ if necessary, we assume $|\lambda_\mu^u|\le |\lambda_\mu^s|$.  We continue to write $\Folu$ and $\Fols$ for the foliations of $\T^2$ induced by the dynamics of $a$.  For $g\in \Cmu$,   Proposition \ref{prop:1} guarantees $g$ preserves $\Folu$.  
%Note that if $h_\mu(g) = 0$ for all $g\in \Cmu$ then there is nothing to prove; hence we assume otherwise. Under these assumptions, 
We show the entropy $h_\mu(g)$ is effectively computed by the dynamics of $g$ along the foliation $\Folu$. %; we note this requires $r\ge 1+\alpha$.

First note that by Corollary 15.4.2 of \cite{MR2348606} and the fact that $\mu$ has no atoms, for any $C^{1+\alpha}$ $\mu$-preserving diffeomorphism $g\colon \T^2 \to \T^2$ and $\mu$-a.e.\ regular point $x$, either
\begin{itemize}
\item[] $r(x) = 0$ and $\lambda_0(x) = 0$, or 
\item[] $r(x) = 1$ and $\lambda_0(x) \cdot \lambda_1(x) \le 0$.
\end{itemize}
In other words, there is no positive measure set for which the Lyapunov exponents for $g$ are defined and are all positive or all negative.  For $g\in \Cmu$ we notate by $\Lambda(g)$ the set of regular points under $g$ whose Lyapunov exponents are not all positive or all negative.  The functions $r$ and $\lambda_j$ will be as in Section \ref{sec:lyap} with respect to the dynamics of $g$.  We write $E^i_g(x)$ and $\wtd W^i_g(x)$ for the Lyapunov subspaces and corresponding Pesin manifolds at $x$ under the dynamics of $g$.

For $g\in \Cmu$,  define a bounded measurable function $\chi_g$ on $\T^2$ by
	 \[\chi_g\colon x \mapsto \limsup_{n\to \infty} \dfrac{1}{n} \log\left(\|Dg^n _x v\|\right)\] 
	 where $v\in T_x\Folu(x)\sm\{0\}$.
Defining the function $J_g$ on $\T^2$ by \begin{align}\label{eq:Jg}J_g\colon x \mapsto \dfrac{\|Dg _x(v)\|}{\|v\|}\end{align} for $v\in T_x\Folu(x)\sm\{0\}$, we alternatively have  
\begin{align*}\chi_g(x) = \limsup_{n\to \infty} \dfrac{1}{n}\sum_{i= 0}^ {n-1} \log(J_g (g^i(x))).\end{align*}	 

For $g\in \Cmu$ let $\I_g$ denote the $\sigma$-algebra of $g$-invariant sets.  By the Birkhoff Ergodic Theorem (see, for example, \cite{MR1326374}) we have for $\mu$-a.e.\ $x$  the equalities
\begin{align}
\chi_g(x)& = \Exp(\log J_g\mid \I_g) (x) \notag\\
& =\lim _{n\to \pm \infty }  \dfrac{1}{n} \log\left(\|Dg^n _x v\|\right) \label{eq:ShowsLyap}.
%& =\lim _{n\to \infty  \lim_{n\to \infty} \dfrac{1}{n} \log\left(\|Dg^n _x v\|\right)
\end{align}	
Here, the right hand side of the first equality denotes a conditional expectation.  % of $\log J_g$ conditioned on $\I_g$.  
In particular \eqref{eq:ShowsLyap} shows that  $\chi_g(\cdot)$ is a Lyapunov exponent, whence,  for $\mu$-a.e.\ $x\in \Lambda(g)$ with $r(x) = 1$, we have that $T_x \Folu (x)$ is a Lyapunov subspace.   Indeed if $x\in \Lambda(g)$ is a regular point with $r(x) = 1$ and $0\neq v\in T_x \Folu (x)$ satisfies \[v = \alpha_0 v_0 + \alpha_1 v_1\] for $v_j \in E^j_g(x)$ and $\alpha_j \neq 0$ then we have 
  \[\lambda_0 (x) = \lim_{n\to -\infty} \dfrac{1}{n} \log\left(\|Dg^n _x v\|\right)  \neq  \lim_{n\to \infty} \dfrac{1}{n} \log\left(\|Dg^n _x v\|\right)  = \lambda_1(x)\] which can only hold on a null set by \eqref{eq:ShowsLyap}.  
  Let $i(x)$ be the a.e.-defined $\{0,1\}$-valued function on $\Lambda(g)$  satisfying $\chi_g(x) = \lambda_{i(x)}(x)$.

\newcommand{\Expo}{\mathrm{Exp}}

\begin{claim}\label{clm:StabFol}
For $x\in \Lambda(g)$ with $i(x)$ defined and $\lambda_{i(x)} (x)\neq 0$  we have that $\wtd W^{i(x)}_g(x) \cap \Folu(x)$ contains a neighborhood of $x$ in $\Folu(x)$.  
\end{claim}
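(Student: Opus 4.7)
The plan is to show that $\Folu(x)$ coincides with $\widetilde W^{i(x)}_g(x)$ in a neighborhood of $x$. The preceding discussion establishes that both objects are $C^{1+\alpha}$ curves through $x$ tangent to the common line $E^{i(x)}_g(x)=T_x\Folu(x)$, so it suffices to verify that nearby points on $\Folu(x)$ satisfy the dynamical characterization of $\widetilde W^{i(x)}_g(x)$ given in Section \ref{sec:lyap}.

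Assume first that $\lambda_{i(x)}(x)>0$; the case $\lambda_{i(x)}(x)<0$ is reduced to this one by replacing $g$ with $g^{-1}$. Since $g\in\Cmu$ preserves $\Folu$ by Proposition \ref{prop:1}, the iterates $g^{-n}(y)$ and $g^{-n}(x)$ always lie on a common $\Folu$-leaf, and their ambient distance is bounded by the leaf distance $L_n:=d_{\Folu}(g^{-n}(y),g^{-n}(x))$. It therefore suffices to prove
\[
\liminf_{n\to\infty}\,-\tfrac{1}{n}\log L_n \,\ge\, \lambda_{i(x)}(x)
\]
for $y$ in some sufficiently small neighborhood of $x$ in $\Folu(x)$.

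Parametrize the segment from $x$ to $y$ on $\Folu(x)$ by arclength $\gamma\colon[0,s]\to\Folu(x)$. Writing $J_g$ for the leaf Jacobian of \eqref{eq:Jg} and $\Delta_n(z):=\log J_{g^{-n}}(z)-\log J_{g^{-n}}(x)$, the cocycle identity gives
\[
L_n \,=\, \int_0^s J_{g^{-n}}(\gamma(t))\,dt \,=\, J_{g^{-n}}(x)\int_0^s e^{\Delta_n(\gamma(t))}\,dt.
\]
The heart of the proof is a uniform bounded-distortion estimate: there should exist $s_0,B>0$ (depending on $x$) such that $\sup_{t\in[0,s]}|\Delta_n(\gamma(t))|\le B$ for all $s<s_0$ and all $n\ge 1$. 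Granted this, $L_n\le e^B s\cdot J_{g^{-n}}(x)$, and combined with the Pesin regularity $\tfrac{1}{n}\log J_{g^{-n}}(x)\to-\lambda_{i(x)}(x)$ this immediately yields the required lower bound on $\liminf -\tfrac{1}{n}\log L_n$.

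The distortion estimate is the one place where care is required. Since $g$ is $C^{1+\alpha}$ and the leaves of $\Folu$ are $C^{1+\alpha}$ (Proposition \ref{prop:folreg}), $\log J_g$ is $\alpha$-H\"older along leaves; telescoping then gives
\[
|\Delta_n(\gamma(t))| \,\le\, C\sum_{i=1}^n d_{\Folu}(g^{-i}(x),g^{-i}(\gamma(t)))^\alpha \,\le\, Cs^\alpha \sum_{i=1}^n J_{g^{-i}}(x)^\alpha\, e^{\alpha\sup_{t'\in[0,s]}|\Delta_i(\gamma(t'))|}.
\]
This is a self-consistent recursion for the sequence $\sup_t|\Delta_n(\gamma(t))|$, whose closure reduces to the smallness of $s^\alpha\sum_n J_{g^{-n}}(x)^\alpha$. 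This quantity becomes arbitrarily small as $s\to 0$ precisely because Pesin regularity at $x$ together with $\lambda_{i(x)}(x)>0$ forces $\sum_n J_{g^{-n}}(x)^\alpha$ to converge. The strict positivity of $|\lambda_{i(x)}(x)|$ enters exactly here, which is why the hypothesis $\lambda_{i(x)}(x)\neq 0$ appears in the claim.
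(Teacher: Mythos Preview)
Your argument is correct and complete in its essentials. The bootstrap closes cleanly: writing $M_n=\sup_t|\Delta_n(\gamma(t))|$ and $a_i=J_{g^{-i}}(x)^\alpha$, the telescoped H\"older estimate gives $M_n\le Cs^\alpha\sum_{i<n}a_i e^{\alpha M_i}$ (your indices are off by one, but harmlessly), and since regularity at $x$ forces $\sum a_i<\infty$, choosing $s$ small enough that $Cs^\alpha e^\alpha\sum a_i\le 1$ yields $M_n\le 1$ for all $n$ by induction. One point worth making explicit is that the H\"older constant for $\log J_{g^{-1}}$ along leaves is uniform over all of $\T^2$, which follows from compactness together with the $C^{1+\alpha,1+\alpha'}$ regularity of $\Folu$ in Proposition~\ref{prop:folreg}.

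The paper proceeds differently. Rather than a distortion argument, it works in leafwise exponential charts and writes the local dynamics as a uniformly small Lipschitz perturbation $G_y$ of the linear cocycle $Dg_y|_{T_y\Folu}$. The key observation is that because $T_y\Folu$ is one-dimensional, the norm and conorm of $Dg_y$ coincide, so $G_y(B(0,R))\subset Dg_y(B(0,(1+\eta)R))$ with $\eta$ arbitrarily small; iterating gives $\|G_x^n(v)\|\le C e^{n(\lambda_0(x)+2\epsilon)}\|v\|$, which identifies a leaf neighborhood as a local Pesin stable manifold. This route avoids the H\"older bookkeeping entirely and in fact only uses $C^1$ regularity of $g$ for this step (the $C^{1+\alpha}$ hypothesis is needed elsewhere for Pesin theory). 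Your approach is the more classical bounded-distortion argument and makes the role of the $\alpha$-H\"older exponent transparent; the paper's is shorter but leans on a trick specific to one-dimensional leaves, as it notes explicitly.
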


\newcommand{\ball}[2][r]{B_{#2}(0,{{#1}})}
\begin{proof}
Fix a Riemannian metric on $\T^2$ and let $$\Expo'_y\colon T_y \Folu(y) \to \Folu(y)$$ denote the exponential map for the restriction of the metric to $\Folu(y)$.  We have that $\Expo'_y$ is $C^{1+\alpha}$.   
We notate by  $\ball y \subset  T_y \Folu(y)$ the norm-ball of radius $r$ centered at zero in $T_y \Folu(y)$.  For a fixed $r$ we then have that the maps $\Expo'_y\colon \ball y \to \Folu(y)$ are bi-Lipschitz with Lipschitz constants bounded uniformly in the variable $y$.  

For $r$ sufficiently small define $\td g\colon\ball y \to T_{g(y)}  \Folu(g(y))$ by $$\td g =(\Expo'_{g(y)})\inv\circ g \circ \Expo'_y.$$    Using a smooth bump function, we may build $G_y\colon T_y \Folu (y)\to T_{g(y)}  \Folu(g(y))$ with 
$$G_y (v) = \begin{cases} \td g(v) & \|v\| \le r,\\
Dg_y (v)& \|v\|\ge 2r.
\end{cases}$$
We have that $G_y$ is a Lipschitz perturbation of $Dg_y$:  $$ \| (Dg_y - G_y) (v)-(Dg_y - G_y) (u)\| \le \gamma_r \|v-u\|.$$
Furthermore $Dg_y(0) = G_y(0) = 0$ by construction, hence 
$$ \| (Dg_y - G_y) (v)\| \le \gamma_r \|v\|.$$
Furthermore, by taking $r$ sufficiently small, we may make $\gamma_r$ arbitrarily small.  We emphasize that the above bounds are uniform over all $y \in \T^2$.  We write $G^n_y:= G_{g^n(y)} \circ G_{g^{n-1}(y)} \circ \dots \circ G_{y}$.  

Now let $x$ be as in the claim. By passing to $g\inv$ if necessary we may assume $i(x) = 0$, that is, $\lambda_1(x)\ge 0>\lambda_0(x)$.  Fix some $0<\epsilon <\frac{1}{4}| \lambda_0(x)|.$ % - \lambda_0(x))$.   
\begin{comment}
Then there is a function $C(x) = C_\epsilon(x)$ so that for any $v\in T_x \Folu(x) $
$$\|Dg^n_x v\|\le C e^{\lambda_0(x) + \epsilon} \|v\|$$
and $$C(g^{|n|}(x)) \le C(x) e^{\epsilon |n|}$$ for any $n\in \Z$.  
(See, for example, \cite{MR2348606}).
We inductively calculate that for $n\ge 0$ $$\|G_x^n(v)\|\le \left(C(x) e^{\epsilon} e^{\lambda_0(x) +\epsilon} +\gamma_r\right)^n\|v\|$$
\end{comment}
The non-uniform hyperbolicity of $Dg$ along the orbit of $x$ guarantees we may find a constant $C= C(x,\epsilon)$ (where $C(x,\epsilon)$ depends measurably on $x$) so that for $v\in T_x \Folu(x) $
$$\|Dg_x^n v\|\le C e^{n(\lambda_0(x) + \epsilon)} \|v\|.$$

We write $\eta = e^\epsilon-1>0$.  For $ v\in T_y \Folu(y)\sm \{0\}$, we may choose $r$ small enough so that 
$$\gamma_r<\eta \cdot \inf\left \{ J_g(y) \mid y \in \T^2\right\} $$ where $J_g$ is as in \eqref{eq:Jg}.
The bound on $\gamma_r$ then guarantees that for any $R$ and $y\in \T^2$ 
$$G_y(\ball[R] y) \subset Dg_y\left( \ball[(1+\eta) R] y\right).$$
Indeed, for any $ v\in B_y(0, R)$ %T_y \Folu(y)$
\begin{align*}
\|G_y(v)\| &\le \|Dg_y(v)\| + \gamma \|v\|\\
&\le J_g(y) R + \gamma R\\
&\le (J_g(y) +\eta J_g(y)) R\\
\end{align*}
so $G_y(v) \subset Dg_y (B_y(0, (1+\eta)R)$ for all $y\in M$.

%(where $(1+\eta)\cdot A$ denotes the scaling of all vectors in $A$ by a factor of $1+\eta$.)  
Consequently, we obtain
$$G_y^n(\ball[R] y) \subset Dg_y^n\left(\ball[(1+\eta)^n R] y\right). $$
(We note the above argument works because $\dim T_y \Folu(y) = 1$ and thus the norm and co-norm of $Dg_y$ are equal at every point $y$; a higher dimensional arguement would require far more subtle  control of the geometry.)

Thus for $v\in T_x \Folu(x) $ we have
$$\|G^n _x (v)\|\le C e^{n(\lambda_0(x) +  \epsilon)}(1+\eta)^n \|v\| = C e^{n(\lambda_0(x) + 2 \epsilon)}\|v\| .$$  
In particular there is some $r'>0$ so that $G^n_x(\ball[r'] x) \subset \ball[r ] {g^n(x)}$ for all $n\ge 0$. 
 Let $$ U =  \Expo_x'(\ball[r'] x ).$$ 
We then have that $U\subset \Folu(x)$ and for $y\in U$ \[d(g^n(x), g^n(y)) \le C' e^{n(\lambda_0(x) + 2 \epsilon)}d(x,y) \] for some $C'$.  This characterizes $U$ as a local stable Pesin manifold for $\lambda_0$ at $x$ and the claim follows.  \end{proof}

For $g\in \Cmu$ let $\Omega(g)\subset \Lambda(g)$ denote the set regular points with no Lyapunov exponent equal to zero.  We partition $\T^2$ into 4 measurable sets $\{{\Upsilon_0 :=\Omega(g)}, \Upsilon_1, \Upsilon_2, \Upsilon_3\}$ where $\Upsilon_1 $, $\Upsilon_2$, $\Upsilon_3$ are the subsets of $\Lambda(g)\sm \Omega(g)$ containing, respectively, one positive, one  negative, or only one (and hence zero) Lyapunov exponent.  Let $\nu_i$ denote the restriction of $\mu$ to $\Upsilon_i$
\[ \nu_i (A) = \mu(A \cap\Upsilon_i).\]
Applying the Ledrappier-Young entropy formula \eqref{eq:LYent} to either  $g$ or $g\inv$ it follows that
\begin{align}\label{eq:ijk}
h_{\nu_i} (g) &=\begin{cases} h_\mu(g)& i = 0,\\[2mm]
0& 1\le i\le 3.
\end{cases}
\end{align} %for $g\in \Cmu$, $r\ge 1+\alpha$.  
That is, the entropy $h_\mu(g)$ is entirely concentrated on the set $\Omega(g)$.  In particular, \eqref{eq:LYent} and Claim \ref{clm:StabFol} imply that for $\mu$-a.e.\ $x\in \Lambda(g) \sm \Omega(g)$ we have $\chi_g(x) = 0$.  

We write $\E^u$ for the unstable linear foliation on $\T^2$ induced by the dynamics of $ L_A$.  
Let $\td \E^u$ denote the pull-back of $\E^u$ to  $\R^2$.  
Note that the quotient space $\R^2/\td \E^u$ may naturally be identified with the $1$-dimensional linear space $ \R^2/\td\E^u(0)\cong \R$.  
Note the homeomorphism $h\circ g \circ h\inv$ preserves the foliation $\E^u$ for $g\in \Cmu$.  Furthermore,
\begin{claim}\label{claim:transAffine}
$h\circ g \circ h\inv$ acts as an affine map transverse to $\E^u$:  any lift of $h\circ  g \circ h\inv$ to $\R^2$ induces an affine action on the quotient $\R^2/\td \E^u\cong \R$.  
\end{claim}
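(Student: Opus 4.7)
The plan is to lift $\tilde g := h\circ g \circ h\inv$ to $\R^2$ and exploit $\Z^2$-equivariance together with the irrationality of the unstable direction of $A$ to promote an additivity relation on a dense subgroup to an affine structure on the quotient.

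First I would observe that $h$ conjugates $\Folu$ to $\E^u$ (since $h\circ a\circ h\inv = L_A$ sends unstable leaves to unstable leaves), and by Proposition~\ref{prop:1} any $g\in\Cmu$ preserves $\Folu$. Hence $\tilde g$ is a homeomorphism of $\T^2$ preserving the linear foliation $\E^u$. Fix any lift $\tilde G\colon \R^2\to \R^2$ of $\tilde g$. Since $\tilde g$ is a homeomorphism of $\T^2 = \R^2/\Z^2$, standard covering space theory supplies a matrix $B\in \GL(2,\Z)$ (recording the action of $\tilde g_*$ on $\pi_1(\T^2)$) such that
\[\tilde G(x + n) = \tilde G(x) + Bn \quad \text{for all } x\in \R^2,\ n\in \Z^2.\]
Because $\tilde G$ permutes the leaves of $\td\E^u$, it descends to a continuous map $\bar G\colon \R\to \R$ under the linear quotient projection $\pi\colon \R^2\to \R$ with kernel $\td\E^u(0)$.

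The next step would be to unpack the $\Z^2$-equivariance on the quotient, which reads
\[\bar G(t + \pi(n)) - \bar G(t) = \pi(Bn) \quad \text{for all } t\in \R,\ n\in \Z^2.\]
In particular, $\bar G(t+s) - \bar G(t)$ is independent of $t$ for every $s$ in the subgroup $D := \pi(\Z^2)\subset \R$. Since the unstable eigenvector of the hyperbolic integer matrix $A$ has irrational slope (the eigenvalues of $A$ are quadratic irrationals), the two generators $\pi(e_1),\pi(e_2)$ of $D$ are incommensurable, so $D$ is dense in $\R$. Combining the density of $D$ with continuity of $\bar G$ extends the identity $\bar G(t+s) - \bar G(t) = \bar G(s) - \bar G(0)$ to all $s\in \R$.

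Finally, setting $F(s) := \bar G(s) - \bar G(0)$ gives a continuous additive function $F\colon \R\to \R$, hence $F(s) = cs$ for some $c\in \R$, so $\bar G(t) = ct + \bar G(0)$ is affine. I do not anticipate serious obstacles in this argument; it is essentially formal once $\Z^2$-equivariance and density are in hand. The only minor point to verify is that the claim concerns \emph{any} lift of $\tilde g$; but two lifts differ by a constant element of $\Z^2$, whose image under $\pi$ adds a constant to $\bar G$, and affineness is clearly preserved under constant translations.
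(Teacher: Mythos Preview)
Your argument is correct. Both your proof and the paper's exploit the irrationality of the unstable direction, but the packaging differs. The paper works directly with distances between leaves in $\R^2$: it fixes two leaves, computes the ratio $\eta$ of the distance between their images to the original distance, and then uses density of a single leaf of $\E^u$ in $\T^2$ to argue that $\eta$ is the same for any pair of leaves separated by the same distance, after which the extension to arbitrary pairs is immediate. You instead invoke the $\Z^2$-equivariance of the lift, pass to the quotient $\R$, and reduce to Cauchy's functional equation on the dense subgroup $\pi(\Z^2)$. Your route is more structural and makes the role of $\pi_1(\T^2)$ and the linearization $B$ explicit; the paper's is more geometric and never names $B$ or the Cauchy equation. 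Neither offers a real advantage here, though your formulation generalizes more transparently to higher-dimensional tori carrying a codimension-one irrational linear foliation.
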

\begin{proof}
Let $\td l\colon \R^2 \to \R^2$ be a lift of $h\circ  g \circ h\inv$.  Choose any $x\in \R^2$ and $y\in \R^s\sm \td \E^u(x)$ and let $\eta = \dfrac{\rho(\td l(\td \E^u(x)) , \td l(\td \E^u(y)))}{\rho(\td \E^u(x), \td \E^u(y))}$ where $\rho$ denotes Euclidean distance.  Since the leaves of $\E^u$ are linear and dense in $\T^2$ we deduce that 
\begin{align}\label{eq:cow} \dfrac{\rho(\td l(\td \E^u(x')) , \td l(\td \E^u(y')))}{\rho(\td \E^u(x'), \td \E^u(y'))}= \eta\end{align} for any $x',y' \in \R^2$ with $\rho(\td \E^u(x), \td \E^u(y))= \rho(\td \E^u(x'), \td \E^u(y'))$.  \eqref{eq:cow} can then be shown to hold for any $x',y' \in \R^2$ and the result follows.
\end{proof}

Note that for any two lifts of $h\circ g\circ h\inv$ to $\R^2$ the induced maps on $\R^2/\td \E^u$ differ only by a translation.  For $g\in \Cmu$ we write $\Psi(g)$ for the linear component of the affine map on $\R^2/\td \E^u$ induced by a lift of $h\circ g\circ h\inv$.

\begin{claim}\label{clm:subset}
Let $g\in \Cmu$ with $\Omega(g) \neq \emptyset$.  Then either 
\begin{itemize}
\item[] $\wtd W^1_g(x)\subset \Folu(x)$ for all $x\in \Lambda(g)$ with $\lambda_1(x) >0$; or 
\item[] $\wtd W^0_g(x)\subset \Folu(x)$ for all $x\in \Lambda(g)$ with $\lambda_0(x) < 0$. 
\end{itemize}
\end{claim}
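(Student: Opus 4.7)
My plan is to leverage the affine transverse action $\Psi(g)$ from Claim \ref{claim:transAffine}: since $\td g := h\circ g \circ h\inv$ preserves $\E^u$ with uniform transverse derivative $|\Psi(g)|$, the transverse Lyapunov exponent of $\td g$ is constantly $\log|\Psi(g)|$ at every point. Under the bi-H\"older conjugacy $h$, this uniform transverse structure descends to constrain the Lyapunov directions of $g$ transverse to $\Folu$.

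First, I show $|\Psi(g)| \neq 1$. Choose $x_0 \in \Omega(g)$ and a point $x_0'$ transverse to $\Folu$ near $x_0$. If $|\Psi(g)| = 1$, then the quotient-distance of $\td g^n h(x_0)$ and $\td g^n h(x_0')$ in $\R^2/\td\E^u$ is preserved; applying $h\inv$ and using bi-H\"older estimates, this forces $d(g^n x_0, g^n x_0')$ to grow at most subexponentially, contradicting that the generic transverse direction at the regular point $x_0 \in \Omega(g)$ carries the positive Lyapunov exponent $\lambda_1(x_0) > 0$. Hence $|\Psi(g)| \neq 1$. Passing to $g\inv$ if necessary (which replaces $\Psi(g)$ by $\Psi(g)^{-1}$ and interchanges the two alternatives of the claim), we may assume $|\Psi(g)| > 1$, and it suffices to prove the second alternative $\wtd W^0_g(x) \subset \Folu(x)$ for all $x \in \Lambda(g)$ with $\lambda_0(x) < 0$.

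Next, I identify the stable Pesin manifolds of $\td g$ with leaves of $\E^u$. With $|\Psi(g)| > 1$, the transverse Lyapunov exponent of $\td g$ is positive everywhere; applying Corollary 15.4.2 of \cite{MR2348606} to $\td g$ with the non-atomic measure $h_*\mu$, the leaf-direction Lyapunov exponent of $\td g$ along $\E^u$ must be nonpositive. At every regular $p$ with strictly negative leaf exponent, the stable Lyapunov subspace of $\td g$ at $p$ therefore equals $T_p\E^u(p)$. Since $\E^u$ is a smooth linear foliation, it follows that $\wtd W^0_{\td g}(p) \subset \E^u(p)$; equivalently, and avoiding regularity subtleties, the topological stable set $W^{ts}_{\td g}(p) := \{q : d(\td g^n q, \td g^n p) \to 0\}$ satisfies $W^{ts}_{\td g}(p) \subset \E^u(p)$. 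The latter inclusion is verified by lifting to $\R^2$: any nonzero initial transverse component of $q - p$ grows by the factor $|\Psi(g)| > 1$ at each step of $\td g$, and the irrationality of the slope of $\E^u$ rules out lattice-wrapping from producing spurious convergence $d_{\T^2}(\td g^n p, \td g^n q) \to 0$.

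Topological stable sets are invariant under continuous conjugacy, so $h(W^{ts}_g(x)) = W^{ts}_{\td g}(h(x)) \subset \E^u(h(x)) = h(\Folu(x))$; combined with the inclusion $\wtd W^0_g(x) \subset W^{ts}_g(x)$, this gives $\wtd W^0_g(x) \subset \Folu(x)$, as required. The main technical obstacle is establishing $W^{ts}_{\td g}(p) \subset \E^u(p)$: this requires careful control over how iterates of $\td g$ interact with the $\Z^2$-lattice action on $\T^2$, using both the irrationality of $\E^u$'s slope and the strict expansion $|\Psi(g)| > 1$ to rule out spurious wrap-around convergence to the base orbit. The H\"older comparison in the first step is delicate but follows standard arguments for transferring sign information of Lyapunov exponents across bi-H\"older conjugacies.
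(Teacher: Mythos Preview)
Your overall strategy---using the affine transverse action of Claim~\ref{claim:transAffine} to pin down which Pesin manifold lies in $\Folu$---is exactly the paper's idea, but your execution has a genuine gap in the first step. You claim that if $|\Psi(g)|=1$ then the preserved transverse distance ``forces $d(g^n x_0, g^n x_0')$ to grow at most subexponentially.'' This does not follow: a constant transverse component in $\R^2/\td\E^u$ gives only a \emph{lower} bound on $|\td l^n\tilde p - \td l^n\tilde q|$, not an upper bound, since the $\E^u$-component of the difference is completely uncontrolled. And the final clause (``the generic transverse direction carries $\lambda_1(x_0)>0$'') conflates growth of the derivative cocycle with orbit separation of an arbitrarily chosen nearby point $x_0'$; without placing $x_0'$ on a Pesin manifold you have no exponential separation statement to contradict. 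Separately, you invoke Lyapunov exponents and Corollary~15.4.2 for $\td g = h\circ g\circ h^{-1}$, which is only a homeomorphism (since $h$ is merely H\"older), so that part of the argument is not available; you do pivot to topological stable sets, but the preceding sentences should be dropped.

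The paper's organization avoids both issues and is worth comparing. Rather than first proving $|\Psi(g)|\neq 1$, it starts from a point $x\in\Omega(g)$, observes (from the earlier analysis that $T_x\Folu(x)$ is a Lyapunov subspace) that exactly one of $\wtd W^0_g(x),\wtd W^1_g(x)$ is transverse to $\Folu(x)$, and takes a point $y$ in the \emph{local} transverse Pesin manifold. Because local Pesin orbits stay uniformly close, one can lift the entire forward (or backward) orbit pair to $\R^2$ consistently and read off $|\Psi(g)|<1$ (or $>1$) directly from Claim~\ref{claim:transAffine}. The other alternative is then excluded by the same reasoning applied to $g^{-1}$. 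This sidesteps your ``wrap-around'' difficulty entirely: the issue only arises when you try to prove the stronger statement $W^{ts}_{\td g}(p)\subset\E^u(p)$ for arbitrary $p,q$, which is more than what is needed. (That stronger statement is in fact true, but the relevant mechanism is not the irrationality of the slope of $\E^u$ as you suggest; rather, one shows that the lattice correction $k_n$ eventually satisfies $k_{n+1}=Bk_n$ for the homology action $B$, and then the exact transverse relation $\pi^s(\epsilon_{n+1})=\Psi(g)\pi^s(\epsilon_n)$ forces $\pi^s(\epsilon_n)=0$, hence $c\in\pi^s(\Z^2)$.)
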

\begin{proof}
Let $x$ be in  $\Omega(g)$.  By passing to $g\inv$ we may assume $\wtd W^0_g(x)$ is transverse to $ \Folu(x)$.  We deduce the first case of the conclusion under these assumptions.

Let $\td l\colon \R^2 \to \R^2$ be a lift of $h\circ  g \circ h\inv$.  Then by Claim \ref{claim:transAffine} for any $x,y\in \R^2$ we have \[\rho(\td l^n(\td \E^u(x)) , \td l^n(\td \E^u(y))) \to 0 \quad \text{as } n\to \infty.\]
If there were any point $x'\in \T^2$ with $\wtd W^1_g(x')\not \subset \Folu(x')$, we would be able to find $x\in \R^2, y \in \R^2\sm \td \E^u(x)$ with  \[\rho(\td l^{-n}(\td \E^u(x)) , \td l^{-n}(\td \E^u(y))) \to 0 \quad \text{as } n\to \infty\] contradicting Claim \ref{claim:transAffine}.  
\end{proof}

\def\bchi{\overline \chi}
Note in particular that either $\chi_g(x)\ge 0$ for $\mu$-a.e.\ $x$ or $\chi_g(x)\le 0$ for $\mu$-a.e.\ $x$.
We define the  function	\[\bchi\colon \Cmu \to \R\]  by \[\bchi\colon g\mapsto \int \chi_g \ d \mu.\]
From  \eqref{eq:LYent}, \eqref{eq:ijk}, Claim \ref{clm:subset}, and the observation that $\chi_g(x) = 0$  for $\mu$-a.e.\ $x\in \Lambda(g)\sm \Omega(g)$ it follows that for $g\in \Cmu$ \begin{align}\label{eq:EntChi} h_\mu(g) = |\bchi(g)| \delta^u = \int |\chi_g(x)| \delta^u \ d \mu(x).\end{align}

We show that $\bchi$ is a homomorphism from $(\Cmu, \circ)$ to $(\R, {+})$:
\begin{align}\label{eq:linFun} \bchi (g_1\circ g_2 ) =  \bchi (g_1) +  \bchi( g_2) .\end{align}
Indeed, with our previous notations we have
$$\chi_g(x) = \lim_{n\to \infty} \dfrac{1}{n}\sum_{i= 0}^ {n-1} \log(J_g (g^i(x)))$$
for $\mu$-a.e.\ $x$ and 
\[\bchi(g) :=\int \chi_g = \int \Exp(\log(J_g) \mid \I_g) = \int \log(J_g )\]
whence 
{
\allowdisplaybreaks\begin{align*}
\bchi(g_1&\circ g_2):= \int \chi_{g_1 \circ g_2} \\
%&= \int \Exp(\chi_{g_1 \circ g_2}\mid \I_{g_1\circ g_2} )\\
&= \int \lim_{n\to \infty} \dfrac{1}{n}\sum_{i= 0}^ {n-1} \log\left(J_{g_1 \circ g_2} \big((g_1\circ g_2)^i(x)\big)\right)\\
&= \int \lim_{n\to \infty} \dfrac{1}{n}\sum_{i= 0}^ {n-1} \left(\log\left(J_{g_1} \circ g_2 \big((g_1\circ g_2)^i(x)\big)\right) + \log\left(J_{ g_2} \big((g_1\circ g_2)^i(x)\big)\right)\right)\\
%&= \int \lim_{n\to \infty} \dfrac{1}{n}\sum_{i= 0}^ {n-1} \log\left(J_{g_1} \circ g_2 \big((g_1\circ g_2)^i(x)\big)\right) + \lim_{n\to \infty} \dfrac{1}{n}\sum_{i= 0}^ {n-1} \log\left(J_{ g_2} \big((g_1\circ g_2)^i(x)\big)\right)  \\
&= \int \Exp (\log(J_{g_1}\circ g_2) \mid \I_{g_1 \circ g_2} )+ \Exp (\log(J_{g_2}) \mid \I_{g_1 \circ g_2} )\\
&=\int \log(J_{g_1}\circ g_2) + \int \log(J_{g_2})\\
&=\bchi(g_1) + \bchi(g_2).
\qedhere
\end{align*}}\ignorespaces 

In particular,  \eqref{eq:EntChi}  and \eqref{eq:linFun} show that for $\mu$ as in Theorem \ref{thm:2} and $r\ge 1+\alpha$,  the metric entropy satisfies a `signed additivity' property on $\Cmu$: 
\begin{align} h_\mu(g_1 \circ g_2) = \begin{cases} 
	 h_\mu(g_1) + h_\mu(g_2) & \bchi(g_1) \cdot \bchi(g_2) \ge 0, \\[.5em]
	  |h_\mu(g_1) - h_\mu(g_2) | & \text{otherwise,}\\
\end{cases}\label{eq:entadd}
 \end{align}
for $g_1, g_2 \in \Cmu$.

\eqref{eq:entadd} establishes part (1) of Theorem \ref{thm:2}.  Note that if $\Cmu$ contains no positive entropy diffeomorphisms Theorem \ref{thm:2} follows.  We thus may assume there exists some element $g\in \Cmu$ with $h_\mu(g)>0$.

\begin{claim}
Let $g_1, g_2 \in \Cmu$ be such that the linear maps $\Psi(g_1) $ and $\Psi(g_2)$ are equal.  Then $h_\mu(g_1) = h_\mu(g_2)$.  
\end{claim}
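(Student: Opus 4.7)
The plan is to reduce the claim to showing that any $g\in \Cmu$ with $\Psi(g)=1$ satisfies $h_\mu(g)=0$. Applying this to $g=g_1\circ g_2\inv$ and using the homomorphism properties of $\Psi$ and $\bchi$ will then yield $h_\mu(g_1)=h_\mu(g_2)$.

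First, I observe that $\Psi\colon \Cmu \to \R^{*}$ is multiplicative. If $\td l_i$ is a lift of $h\circ g_i\circ h\inv$, then $\td l_1\circ \td l_2$ is a lift of $h\circ (g_1 g_2)\circ h\inv$; the induced affine actions on $\R^2/\td\E^u$ compose, and the linear parts multiply. Thus $\Psi(g_1)=\Psi(g_2)$ gives $\Psi(g_1\circ g_2\inv)=1$.

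The crux is showing $\Psi(g)=1\Rightarrow \Omega(g)=\emptyset$. Suppose for contradiction that $x\in \Omega(g)$; then $r(x)=1$ with $\lambda_0(x)<0<\lambda_1(x)$. By Claim \ref{clm:StabFol}, the $1$-dimensional manifold $\wtd W^{i(x)}_g(x)$ contains a neighborhood of $x$ in $\Folu(x)$, so the Lyapunov subspace $E^{i(x)}_g(x)$ equals $T_x\Folu(x)$, while the other Lyapunov subspace is transverse to $T_x\Folu(x)$. Passing to $g\inv$ if necessary, we may assume $\wtd W^0_g(x)$ is transverse to $\Folu(x)$. Choose two nearby points $p,q\in \wtd W^0_g(x)$ lying on distinct leaves of $\Folu$; then $d(g^n(p),g^n(q))\to 0$. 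Because $h$ is H\"older continuous, for $\td l$ a lift of $h\circ g\circ h\inv$ we also have $d(\td l^n(h(p)),\td l^n(h(q)))\to 0$, whence $\rho\bigl(\td l^n(\td\E^u(h(p))),\td l^n(\td\E^u(h(q)))\bigr)\to 0$ in $\R^2/\td\E^u$. However, by Claim \ref{claim:transAffine} the induced action on $\R^2/\td\E^u$ is affine with linear part $\Psi(g)=1$ and is therefore a translation, which preserves the quotient distance. Since $h(p)$ and $h(q)$ lie on distinct leaves of $\td\E^u$, this is a contradiction.

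Consequently $\Omega(g)=\emptyset$, so $\mu(\Upsilon_0)=0$ and \eqref{eq:ijk} yields $h_\mu(g)=0$. Since $\delta^u>0$ (because $h_\mu(a)=\lambda_\mu^u\,\delta^u>0$ by assumption and \eqref{eq:EntChi} applied to $a$), a second use of \eqref{eq:EntChi} gives $\bchi(g)=0$. Applying this to $g=g_1\circ g_2\inv$ and using the additive homomorphism property \eqref{eq:linFun} of $\bchi$ yields $\bchi(g_1)=\bchi(g_2)$, and one more application of \eqref{eq:EntChi} finishes the proof. I expect the main obstacle to lie in the contradiction above: one has to push the exponential Pesin contraction through the merely H\"older conjugacy $h$ and verify that it is genuinely incompatible with an isometric (translation) action transverse to $\td\E^u$.
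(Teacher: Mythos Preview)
Your argument is correct and follows essentially the same route as the paper. The paper's proof is the two-line version: it observes $\Psi(g_2\circ g_1\inv)$ is the identity, asserts ``whence $h_\mu(g_2\circ g_1\inv)=0$'', and then invokes the signed additivity \eqref{eq:entadd}. What you have done is expand that ``whence'' by reproducing the mechanism from the proof of Claim~\ref{clm:subset}: a point in $\Omega(g)$ forces one Pesin manifold to be transverse to $\Folu$, which (via $h$) forces contraction or expansion of the affine action on $\R^2/\td\E^u$, contradicting $\Psi(g)=1$. This is exactly the content the paper is silently importing.

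Your closing worry about pushing the Pesin contraction through the H\"older conjugacy is not a genuine obstacle. Lift $g$ and $h$ to $\td g,\td h\colon\R^2\to\R^2$ and set $\td l=\td h\circ\td g\circ\td h\inv$; for nearby lifts $\hat p,\hat q$ of $p,q$ the orbits $\td g^n(\hat p),\td g^n(\hat q)$ remain uniformly close (they lie on a local stable Pesin manifold), and uniform continuity of $\td h$ gives $\td l^n(\td h(\hat p))-\td l^n(\td h(\hat q))\to 0$ in $\R^2$, hence in $\R^2/\td\E^u$. No exponential rate survives, but none is needed---an isometry cannot send a fixed nonzero quotient distance to zero.
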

\begin{proof}
We have that $\Psi(g_2 \circ g_1\inv)$ is the identity whence $h_\mu(g_2 \circ g_1\inv) = 0$. % by similar arguments as in the proof of Claim \ref{clm:subset}.  
The result then follows from  \eqref{eq:entadd}.  
\end{proof}

Now for $g\in \Cmu$ fix a lift $\td l\colon \R^2 \to \R^2$ of the homeomorphism $h\circ  g \circ h\inv\colon \T^2\to \T^2$.  Let $v= \td l(0)$.   Then the map $x\mapsto \td l(x)-v$ preserves the lattice $\Z^2$ and the linear map it induces on $\R^2/\td \E^u$ is equal to $\Psi(g)$.  Let $L\colon \R^2 \to \R^2$ be the unique linear extension of the action of $x\mapsto \td l(x)-v$ restricted to the lattice $\Z^2$.  
By the density of leaves of $\E^u$ on $\T^2$ the linear action induced by $L$ on $\R^2/\td \E^u$ is also equal to $\Psi(g)$.

Since $L$ and $A$ both descend to automorphisms of $\T^2$ which preserve the one-dimensional subgroup $\E([0])$, they commute.
We have that the centralizer of $A$ in $\Gl(2,\Z)$ is of the form \[C(A) = \{ \pm M^n \mid n\in \Z\}\]  for some hyperbolic matrix $M$  (see, for example, \cite{MR1449997}).  
Hence $L = \pm M^n$ for some $n$; in particular, for any $g\in \Cmu$ the linear map  $\Psi(g)$  is equal to the map induced by $ \pm M^n$ on $\R^2 /\td \E$ for some $n\in \Z$.  Consequently we obtain that there is a smallest positive entropy for all diffeomorphism in $\Cmu$.
Indeed for any $g\in \Cmu$ with $h_\mu(g) >0$ we have that $\Psi(g) $ is equivalent to the map induced by $\pm M^n$ for some $n$, thus a (non-strict) lower bound on the entropy of any positive entropy  map in $\Cmu$ is \[\dfrac{1}{|n|}h_\mu (g).\]
We check the above lower bound is in fact independent of the choice of $g$.  Let $g'$ be such that $h_\mu(g')>0$ and $\Psi(g')$ is equivalent to the map induced by $\pm M^{n'}$ for some $n'$.  Then we have 
$h_\mu((g')^n) = h_\mu(g^{n'})$ hence
\[\dfrac{1}{|n|}h_\mu (g) = \dfrac{1}{|n'|}h_\mu (g').\]
 It then follows that the set
  $$\{h_\mu(g)\mid g\in \Cmu\}$$
   is discrete.

Let $\lambda$ denote the smallest positive entropy attained  by any map in $\Cmu$ and let  $g\in \Cmu$ be so that $h_\mu(g)= \lambda$.  Then the image of $g$  generates the subgroup $\Cmu/N$ in statement (2) of Theorem \ref{thm:2}.  Indeed, suppose there is a  $g'\in \Cmu$  with $g'\neq g^n \circ l$ for any $n\in \Z$ and $l\in \Cmu$ with $h_\mu(l) = 0$.  We have that $h_\mu(g^n \circ l) = |n|\lambda$, hence 
there is a $k\in \N$ such that  $h_\mu (g^k) < h_\mu (g')<h_\mu (g^{k+1})  $ whence we obtain either 
\[0<h_\mu (g^{-k}\circ (g')\inv) < h_\mu (g) \]
or 
\[0<h_\mu (g^{-k}\circ g') < h_\mu (g) \]
from \eqref{eq:entadd}.  This contradiction completes the proof of statement (2) of Theorem \ref{thm:2}.  \qed

%%%%%%%%%%%%%%%%%%%%%%%%%%%%%%%%%%%%%%%%%%

%%%%%%%%%%%%%%%%%%%%%%%%%%%%%%%%%%%%%%%%%%
\section{Proof of Theorems \ref{thm:1} and  \ref{thm:1'}}
We begin with a claim that reduces Theorems \ref{thm:1} and  \ref{thm:1'} to the case of affine transformations.
Recall that we identify the torus $\T^n$ with the quotient group $\R^n/\Z^n$.  We write $[x]$ for the equivalence class of $x$ in $\T^n$.   For $B\in \GL(n,\Z)$ we write $L_B$ for the induced map on $\T^n$ and for $v\in \R^n$ we write $T(v)$ for the toral rotation $[x]\mapsto [x+v]$.  

\def\E{\mathcal E}
By a \emph{$k$-dimensional linear foliation} $\E$ of the torus, we mean the partition of $\T^n$ by cosets of $H$, where  $H$ is a connected $k$-dimensional subgroup of $\T^n$. We say a linear foliation is \emph{irrational} if each leaf $\E([x])$ is dense in $\T^n$ and is the injective image of $\R^k$.

\begin{claim}\label{prop:affine}
Let $\E_1$ and $\E_2$ be $k_1$- and $k_2$-dimensional, irrational linear foliations of $\T^n$ with $1\le k_i$, $k_1 + k_2 = n$ and such that $\E_1([0]) \cap \E_2([0])$ contains no 1-dimensional subgroups.   Let $g \colon \T^n \to \T^n$ be a homeomorphism preserving the foliations $\E_j$.   Then $g$ is affine; that is, there are $B\in \GL(n, \Z)$ and $ v\in \R^n$ such that \[g= T(v)\circ L_ B.\]  

\end{claim}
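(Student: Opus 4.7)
The plan is to lift $g$ to $\R^n$ and exploit the product structure coming from the hypothesis $\R^n = H_1 \oplus H_2$, where $H_i \subset \R^n$ is the $k_i$-dimensional linear subspace whose image in $\T^n$ is the leaf $\E_i([0])$ of the foliation $\E_i$. The no-one-dimensional-subgroup hypothesis on $\E_1([0]) \cap \E_2([0])$, combined with $k_1 + k_2 = n$, should force $H_1 \cap H_2 = \{0\}$, giving the direct sum decomposition. Moreover, irrationality of both foliations forces $H_1 \cap \Z^n = H_2 \cap \Z^n = \{0\}$, so the projections $\pi_i\colon \Z^n \to H_i$ along $H_{3-i}$ are injective, and the image $\Gamma_i := \pi_i(\Z^n)$ is a dense subgroup of $H_i$.

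Fix a lift $\widetilde g \colon \R^n \to \R^n$ of $g$. In the coordinates $x = x_1 + x_2$, $x_i \in H_i$, the leaves of the pullback foliations $\widetilde\E_i$ take the form $H_1 \times \{c\}$ and $\{c\} \times H_2$. Since $\widetilde g$ preserves each of $\widetilde\E_1$ and $\widetilde\E_2$, the second coordinate of $\widetilde g(x_1,x_2)$ depends only on $x_2$ and the first only on $x_1$; thus
\[\widetilde g(x_1, x_2) = (\phi(x_1), \psi(x_2))\]
for continuous maps $\phi\colon H_1 \to H_1$ and $\psi\colon H_2 \to H_2$.

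The next step is to upgrade $\phi$ and $\psi$ to affine maps using $\Z^n$-equivariance. Since $\widetilde g$ descends to $\T^n$, there is a map $\sigma\colon \Z^n \to \Z^n$ with $\widetilde g(x + v) = \widetilde g(x) + \sigma(v)$; decomposing $\sigma(v) = \sigma(v)_1 + \sigma(v)_2$ in $H_1 \oplus H_2$ and writing $v = v_1 + v_2$ gives
\[\phi(x_1 + v_1) - \phi(x_1) = \sigma(v)_1, \qquad \psi(x_2 + v_2) - \psi(x_2) = \sigma(v)_2,\]
independently of $x_1, x_2$. As $v$ ranges over $\Z^n$, the components $v_1$ range over the dense subgroup $\Gamma_1 \subset H_1$. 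Continuity of $\phi$ then lets us define $\lambda(h) := \phi(x_1 + h) - \phi(x_1)$ for all $h \in H_1$ (the limit being independent of $x_1$), and the additivity $\lambda(h + h') = \lambda(h) + \lambda(h')$ forces $\lambda$ to be a continuous homomorphism, hence $\R$-linear. Consequently $\phi(x_1) = \phi(0) + \lambda(x_1)$ is affine, and the same argument gives $\psi$ affine.

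Therefore $\widetilde g$ is an affine map of $\R^n$; write $\widetilde g(x) = Bx + w$. The equivariance relation $\widetilde g(x + v) - \widetilde g(x) = Bv \in \Z^n$ for every $v \in \Z^n$ forces $B \in M_n(\Z)$, and applying the same argument to $\widetilde g^{-1}$ (which is also affine and descends to $g^{-1}$) gives $B^{-1} \in M_n(\Z)$, so $B \in \GL(n,\Z)$ and $g = T(w) \circ L_B$. The main technical point to verify carefully is the density statement $\overline{\Gamma_i} = H_i$, which relies crucially on the irrationality of the complementary foliation $\E_{3-i}$; the rest of the argument is essentially a continuity-plus-Cauchy-functional-equation extension.
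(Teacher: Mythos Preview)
Your argument is correct and follows essentially the same route as the paper's proof: lift to $\R^n$, use the transversality hypothesis to obtain $\R^n = H_1 \oplus H_2$, use irrationality to see that the projections of $\Z^n$ are dense in each factor, and then run a Cauchy--functional--equation argument plus continuity. The only difference is organizational: the paper works directly with $\bar g := \tilde g - \tilde g(0)$ and the dense additive set $\Xi = \{\tilde\E_1(n)\cap\tilde\E_2(m) : n,m\in\Z^n\}$ (which in your coordinates is exactly $\Gamma_1 \times \Gamma_2$), verifying $\bar g(x+y)=\bar g(x)+\bar g(y)$ on $\Xi$ in one stroke, whereas you first split $\tilde g$ as $(\phi,\psi)$ and treat each factor separately.
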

\begin{proof}
Let $\td g$ be any lift of $g$ to $\R^n$, let $v = \td g(0)$ and set $\bar g\colon x \mapsto \td g(x) - v$.  Then $\restrict {\bar g }{\Z^n}$ is a homomorphism.  Write $\td \E_j$ for the lifts of the foliations to $\R^n$.  We note that $\td \E_1(x) \cap \td \E_2(y)$ contains exactly one point for each $x,y\in \R^n$ and the set $$\Xi:= \{ \td \E_1(n) \cap \td \E_2(m) \in \R^n \mid n,m\in \Z^n\}$$ is dense in $\R^n$. We check that $\Xi$ is closed under addition in $\R^n$ and that $\bar g(x+y) = \bar g(x)+\bar g (y) $ for $x,y \in \Xi$.  By the continuity of $\bar g$ we have that $\bar g$ is linear, and the claim holds.  
\end{proof}

\newcommand{\V}{\mathcal{V}}
\newcommand{\Id}{\mathrm{I}}
\begin{proof}[Proof of Theorems \ref{thm:1} and \ref{thm:1'}]
We prove both theorems simultaneously.  By Proposition \ref{prop:2} and Proposition \ref{prop:1}, respectively, for any $r\ge 1$, any $g\in \Cmu$ satisfying the hypotheses of  Theorem \ref{thm:1} and any $g\in \Cmu[\{\mu, \nu\}]$ satisfying the hypotheses of  Theorem \ref{thm:1'} preserves both foliations  $\Fols$ and $\Folu$.  Write \[\Gamma = \begin{cases}
\Cmu & \text{in Theorem \ref{thm:1}},\\
 \Cmu [\{\mu, \nu\}]  & \text{in Theorem \ref{thm:1'}}.\end{cases}\]

Recall that we have $A\in \GL(2,\Z)$ and $h\colon \T^2\to \T^2$ such that $L_A \circ h = h \circ  a$.  For any $g\in \Gamma$ we have that $h \circ g \circ h\inv $ preserves the linear stable and unstable  foliations induced by the dynamics of $L_A$.  These foliations satisfy the hypothesis of Claim \ref{prop:affine}, whence we conclude that $h \circ g \circ h\inv = T(v)\circ L_B$ for some $B\in \GL(2, \Z)$ and $v\in \R^2$.  We note that $L_B$ preserves the unstable foliation of $\T^2$ induced by the dynamics of $L_A$.  By the density of leaves of the (1-dimensional) unstable foliation for $L_A$, we conclude that  $L_A $ and $L_B$, hence $A$ and $B$, commute.  

Note that in the case of Theorem \ref{thm:1'}, one of $\mu$ or $\nu$ is not the measure of maximal entropy; we assume that $\mu$ is this measure. 
In the case of either theorem write \[H:= \{[v]\in \T^2\mid T(v) _*(h_*(\mu) ) = h_*(\mu)\} .\] %\{[v]\in \T^2 \mid h\inv \circ T(v)\circ h \in \Gamma\}.\] 
\begin{claim}
$H$ is finite.  
\end{claim}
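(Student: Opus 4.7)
The plan is to exploit the fact that $H$, as the stabilizer of the Borel probability measure $h_*\mu$ under the continuous translation action, is automatically a closed subgroup of the compact Lie group $\T^2$. Therefore $H$ is a finite union of cosets of its identity component $H^0$, and $H^0$ is a sub-torus of dimension $0$, $1$, or $2$. Showing $H$ is finite reduces to showing $\dim H^0 = 0$, and I would rule out the other two cases separately.

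The first step is to observe that $H$ is preserved by the linear action of $A$ on $\T^2$. Using $h\circ a = L_A \circ h$, the measure $h_*\mu$ is $L_A$-invariant; combined with the commutation $L_A \circ T(v) = T(Av) \circ L_A$ on $\T^2$, one checks directly that $[v]\in H$ implies $T(Av)_*(h_*\mu) = L_{A*} T(v)_* (L_A^{-1})_*(h_*\mu) = h_*\mu$, i.e.\ $[Av]\in H$. In particular the connected subgroup $H^0$ is preserved by $A$.

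Next I would rule out $\dim H^0 = 1$. A closed one-dimensional subgroup of $\T^2$ necessarily has rational slope (an irrational direction generates a dense and hence non-closed one-parameter subgroup), and $A$-invariance of such a circle forces the slope vector $v_0$ to be an eigenvector of $A$. But $A\in\GL(2,\Z)$ is hyperbolic, so its eigenvalues are quadratic irrationals and its eigenspaces have irrational slope; no rational direction can be $A$-invariant. This contradicts $\dim H^0 = 1$.

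Finally, if $H^0 = \T^2$ then $H = \T^2$ and $h_*\mu$ is invariant under every translation, hence equals Haar measure on $\T^2$ by uniqueness of translation-invariant probability measures. Since $h$ conjugates $a$ to the linear Anosov $L_A$, and Haar is the unique measure of maximal entropy for $L_A$, pulling back by $h$ forces $\mu$ to be the measure of maximal entropy for $a$. This contradicts the standing hypotheses—directly in Theorem~\ref{thm:1}, and by the agreed choice of $\mu$ as the non-MME member of the pair in Theorem~\ref{thm:1'}. The only substantive ingredient is the rational-direction argument, which rests essentially on hyperbolicity of $A$; everything else is routine bookkeeping with closed subgroups of tori and with the conjugacy $h$.
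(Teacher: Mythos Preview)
Your argument is correct and follows essentially the same route as the paper: establish that $H$ is a closed $L_A$-invariant subgroup of $\T^2$, rule out the case $H=\T^2$ via the measure-of-maximal-entropy contradiction, and rule out proper infinite subgroups. The only difference is cosmetic: the paper packages the exclusion of a one-dimensional $H^0$ into the single word ``irreducible'' (a hyperbolic $A\in\GL(2,\Z)$ has no proper infinite closed invariant subgroups of $\T^2$), whereas you unpack that fact directly via the rational-slope versus irrational-eigendirection argument.
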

\begin{proof}
 Recall that  $B\in \GL(n,\Z)$ is  said to be \emph{irreducible} if the only $L_B$-invariant proper closed subgroups of $\T^n$ are finite.  
%We recall that given a compact metric space $X$ and a Borel probability measure $\nu$, the set of $\nu$-preserving homeomorphism of $X$ is closed in the metric of uniform convergence.  
%Since $h_*(\mu) $ is non-atomic 
We verify that $H$ is a closed $L_A$-invariant subgroup of $\T^2$.   Since $A$ is irreducible, if $H$ were infinite we would have $H = \T^2$ which would imply $ h_*(\mu)$ is the Haar measure on $\T^2$.  But it is well known that the only $a$-invariant measure $\mu$ for which  $h_*(\mu)$ is the Haar measure is the measure of maximal entropy.  
Thus $H$ is finite.  %$H'$, and consequently $H$, are finite.  
\end{proof}
Writing $C(A)$ for the centralizer of $A$ in $\Gl(2,\Z)$, we know again  that  $C(A)$ is of the form \[  \{ \pm M^n \mid n\in \Z\}\]  for some hyperbolic matrix $M$.  
%Replacing $M$ with $-M$ if necessary 
Replacing $M$ with  $-M$ if needed we may find a $k$ such that 
 \[h \circ a \circ h \inv=L_A = L_{  M^{k}}.\]
Then for any $g\in \Gamma$ we may find $v\in \R^2$, $l \in \Z$, and $\sigma\in \{-1,1\}$ so that \begin{align}\label{eq:loo}
h \circ g \circ h\inv = T(v) \circ L_B = T(v)\circ L_{\sigma M^{l}}  .\end{align}
We calculate that for $g$ as above
\[g\circ a \circ g\inv \circ a^{-1} = h\inv \circ  T(v - M^{k}v  ) \circ h \]
whence we have that $v\in (L_{I-  M^{k}} )\inv H$ where $L_{I-  M^{k}}$ denotes the toral endomorphism induced by $I-  M^{k}\colon \R^2 \to \R^2$.  
In particular, $v$ has rational coordinates.  

Now, if for every $g\in \Gamma$, the corresponding $l$ in \eqref{eq:loo} is zero, it follows that the group $\Gamma$ is finite and the conclusion of each theorem follows with $m=0$.

We thus assume the existence of a $g\in \Gamma$ with infinite order and derive the remainder of the result.    
Fix an infinite order $g\in \Gamma$ and  corresponding $B, v, \sigma$ and $l\neq 0$ as in \eqref{eq:loo}.  We have that the orbit of $[0]$ under the map $T(v) \circ L_B$ is finite.  Indeed $T(v) \circ L_B$ has a unique fixed point, whence, by a change of coordinates on $\T^2$, we may interpret $T(v) \circ L_B$ as an automorphism and $[0]$ as a point with rational coordinates.   We thus find a $j$ so that $(T(v) \circ L_B)^j([0]) = [0]$.  We check from \eqref{eq:loo} that $$h\circ g^{m} \circ h\inv (x) = L_{B^{m}}(x) + (T(v) \circ L_B)^{m}([0])$$ for $x\in \T^2$, whence
\[h\circ g^{2jk} \circ h\inv (x) = L_{M^{2ljk}}(x) + (T(v) \circ L_B)^{2jk}([0]) = h\circ a^{2jl} \circ h\inv (x).\]
In particular, setting $m = 2jl$ we have $a^m\in \Gamma$.  Note this follows even in the case $r>\theta$.  % though we may have $l=m = 0$.  

 Write \[\Gamma' := \{h \circ \gamma \circ h\inv\mid \gamma \in \Gamma\}\] and
\[G = C(A) \ltimes (L_{I-  M^{k}} )\inv H\]
with multiplication \[(B, [v]) \cdot (B', [v']) = (BB', [Bv' + v]).\]

We abuse notation and identify $T(v)\circ L_B\in \Gamma'$ with $(B, [v]) \in G$ whence we obtain  a natural inclusion of subgroups 
\[\<M^{mk}\>\subset \Gamma'\subset G.\]
Since $\det (I- M^{k}) \neq 0$ and $H$ is finite, $G$ contains $\<M^{mk}\>$ as  a finite index subgroup.  Consequently, $\Gamma'$  contains $\<M^{mk}\>$ as  a finite index subgroup and the conclusion follows.  
\end{proof}

\bibliographystyle{AWBmath.num}
\bibliography{measRig}
\end{document}